\newtheorem{theorem}{Theorem}
\newtheorem{proposition}{Proposition}
\newtheorem{lemma}{Lemma}
\newtheorem{remark}{Remark}
\begin{document}
\sf
\title{\normalsize \bfseries SPECTRAL ANALYSIS OF LONG RANGE DEPENDENCE IN FUNCTIONAL TIME SERIES
}
\date{}
\author{\normalsize M. Dolores Ruiz--Medina\\ \normalsize  IMAG - Unidad de excelencia María de Maeztu - CEX2020-001105-M\\ \normalsize  University of Granada}
\maketitle

\begin{abstract}

Long  Range Dependence (LRD) in functional
sequences is characterized  in the spectral domain under suitable conditions. Particularly,
 multifractionally
integrated functional autoregressive moving averages processes can be introduced in this framework.
The  convergence to zero in
the Hilbert-Schmidt operator norm of the integrated bias of the periodogram operator is proved.
Under a Gaussian scenario, a  weak--consistent parametric
estimator of the long--memory operator is then obtained by minimizing, in  the  norm of bounded linear operators, a divergence information functional  loss.
\end{abstract}

\medskip

\noindent Keywords: Divergence information operator; Functional discrete Fourier transform of curve data;
Long--range dependence; Parameter estimation;   Periodogram
operator; Spectral density   operator; Weak--consistency

\section{Introduction}

One can find evidence of LRD in time series data arising in several
areas like   agriculture, environment, economics, finance,
geophysics, just to mention a few.
 Indeed, a
huge literature on this topic  has been developed over the last few
decades (c.f., \cite{Beran94};  \cite{Beran13};  \cite{Giraitis12};  \cite{Palma07}; \cite{Robinson03}).  This framework allows the
description of processes with long persistence in time. In the
stationary case, LRD is characterized by a slow decay of the
covariance function, and an unbounded spectral density, typically at
zero frequency. In the real--valued process framework, we refer to
the reader to the papers \cite{Andrews}; \cite{Anh04};   \cite{Gao02};  \cite{Gao01};
\cite{Giraitis90}; \cite{Hosoya97}; \cite{Leonenko06};
\cite{Sun03}, among others.

The analysis of  LRD phenomena in an infinite--dimensional  process framework is a
challenging topic where several problems remain open.
 Only a few contributions can be found on this topic in functional
  time series analysis.  LRD in functional sequences is characterized
  by the  non--summability in time of
the nuclear norms of the associated  family of covariance operators. In the
linear case,
 a variable--order
fractional power law  usually characterizes the asymptotic behavior in time   of the norms of the involved    bounded linear operator
 sequence. That is the case of  the
approaches
 in the current literature based   on operator--valued
processes.
A fractional Brownian motion with values in a Hilbert
space, involving an operator-valued Hurst coefficient, is considered
in \cite{Rackauskasv2} (see also \cite{Rackauskas} on the functional
analytical tools applied).
 In \cite{duker18},  a central and functional
central limit theorems are obtained under non--summability of the  operator
norm  sequence. The limit process in this functional central limit
result  is a self--similar process, characterized by an operator  defining the self--similarity  exponent. Note that the LRD
models introduced in these papers in the linear setting are
characterized and analyzed in the time domain. Recently, in
\cite{duker20}, for LRD  linear processes in a separable Hilbert space,
 a stochastic--integral based approach is adopted to
representing the limiting process of the sample autocovariance
operator in the space of Hilbert--Schmidt operators.

 A semiparametric linear framework
 has been
adopted to analyze  LRD in functional sequences in
\cite{LiRobinsonShang19}. The functional
dependence structure is specified via the projections of the curve
process onto different sub-spaces, spanned by the eigenvectors  of
the long-run covariance function. A Central Limit Theorem is derived
under suitable regularity conditions. Functional Principal Component
Analysis  is applied in the consistent estimation of the orthonormal
functions spanning the dominant subspace, where the projected curve
process displays the largest dependence range.    The memory parameter and the
dimension of the dominant subspace are estimated as well.
 The conditions assumed are satisfied, in particular, by a functional
version of fractionally integrated autoregressive moving averages
processes.  Some interesting applications to US stock prices and age
specific fertility rates are also provided.

As follows from the above cited references, the spectral domain  has
not been exploited yet in the formulation and  estimation of LRD in stationary
functional time series. Furthermore, LRD functional time series models have mainly been introduced in the
linear setting.
 Our paper
attempts to cover these gaps.   To this aim, the spectral representation of a self--adjoint operator on a separable Hilbert space, in terms of a
spectral family of projection operators, is considered. Suitable conditions are then assumed on the symbol defining such a representation, for
the spectral density operator family at a neighborhood of zero frequency. Specifically,  the  behavior of the spectral density operator at zero frequency is characterized by a bounded symmetric positive operator family, whose operator norm slowly varies   at zero  frequency, composed with an unbounded operator at zero frequency involving the  long--memory operator.
   The corresponding covariance operator family  displays a  heavy tail behavior in time as proved in Proposition \ref{prlrd}.  As an interesting
special case, we  refer to a  family of
fractionally integrated functional autoregressive
moving averages processes of variable order  (see also Remark 9 in \cite{LiRobinsonShang19}). Several additional examples can be found by  tapering, in the frequency domain,  the symbols of the spectral density operator family, associated with
infinite--dimensional stationary LRD processes in continuous time. Particularly, we consider the case of fractional integration of variable order of functional processes with rational spectral density operator (see, e.g., \cite{AnhLeonenkoa};  \cite{AnhLeonenkob};  \cite{Kelbert05}).
 The   convergence to zero, in the Hilbert--Schmidt operator norm,
 of the integrated periodogram bias operator is derived, under  the square integrability in the frequency
domain of the Hilbert--Schmidt operator norm of the  spectral
density operator family.  This condition holds under mild conditions, in our case,   under  the  second--order property of the functional process, assuming  the  integrability in the frequency domain of the operator norm of the spectral density operator family.
 The weak consistency of the proposed parametric estimator of the long--memory operator then follows in the Gaussian case, extending Theorem 3 in \cite{Anh04}.

 Note that the parametric estimation approach in the spectral domain has not been exploited yet in the functional time series context.  Under short--range dependence
 (SRD),  \cite{Panaretos13}  adopts a nonparametric framework. Specifically, a weighted average  of the functional values of the periodogram operator
 is considered as an  estimator of the spectral density operator.
  This methodology is not
 applicable when one wants to approximate the behavior of the spectral density operator at zero frequency
 in the presence of LRD. In  this paper, we consider a  parametric estimator of the long--memory operator,  computed by minimizing the operator norm of a weighted Kullback--Leibler
divergence operator. This operator compares the behavior at a neighborhood of zero frequency  of the  true spectral
density operator, underlying to the curve data, with  the possible
semiparametric candidates. On the other hand, this functional is
linear with respect to the periodogram operator. This is an
important  advantage of the proposed estimation methodology in
relation to nonparametric kernel estimation.

   The outline of the paper is the following.
   Preliminary definitions,  results and first conditions are established in  Section \ref{prem}. The main assumptions are formulated in Section \ref{LRDlfs}.
   Under this setting of conditions, LRD is characterized in the functional spectral domain.
       The heavy tail behavior in time of the associated covariance operator family is obtained in Proposition \ref{prlrd}. Some examples are provided as well.  In Section \ref{sec2}, the convergence to zero of the Hilbert-Schmidt operator norm of the integrated bias of the periodogram operator is proved
       in Theorem \ref{pr1}.
Under a   Gaussian scenario,
Theorem
\ref{theslrd} in Section \ref{sththeslrd} derives the  consistent parametric estimation  of the long--memory operator in the functional spectral domain.    Some final comments  can be found in Section \ref{conclus}.

\section{Preliminaries}

\label{prem}
    In what follows,  $(\Omega,\mathcal{A},P)$  denotes the
basic probability space. Let  $H$ be a real separable Hilbert space with the inner product $\left\langle\cdot,\cdot\right\rangle_{H}.$  Denote by $\widetilde{H}=H+iH,$ its complex version
 whose elements are functions of the form
$$\psi=\varphi_{1}+ i\varphi_{2},\quad \varphi_{i}\in H,\ i=1,2,$$
\noindent with the inner product
\begin{equation}
\left\langle\varphi_{1}+ i\varphi_{2},\phi_{1}+i\phi_{2}\right\rangle_{\widetilde{H}}=\left\langle \varphi_{1},\phi_{1}\right\rangle_{H}
+\left\langle \varphi_{2},\phi_{2}\right\rangle_{H}+i\left(\left\langle \varphi_{2},\phi_{1}\right\rangle_{H}-\left\langle \varphi_{1},\phi_{2}\right\rangle_{H}\right).
\label{ip}
\end{equation}

Recall that
$\mathcal{L}^{2}_{\widetilde{H}}(\Omega,\mathcal{A},P)$ denotes the
space of second--order zero--mean $\widetilde{H}$--valued random
variables on $(\Omega,\mathcal{A},P),$ with the norm
$\|X\|^{2}_{\mathcal{L}^{2}_{\widetilde{H}}(\Omega,\mathcal{A},P)}=
E[\|X\|^{2}_{\widetilde{H}}],$ for every $X\in
\mathcal{L}^{2}_{\widetilde{H}}(\Omega,\mathcal{A},P).$

In the following, fix an orthonormal basis $\left\{ \varphi_{k},\
k\geq 1\right\}$ of $H,$ and consider \begin{equation}\left\{
\psi_{k}=(1/2)\left[\varphi_{k}+i\varphi_{k}\right],\ k\geq
1\right\},\label{ip0}
\end{equation} \noindent  as an orthonormal basis of  $\widetilde{H}.$
All the subsequent identities involving
operator norms can be expressed in terms of such an  orthonormal
basis, allowing the interpretation of $H$ as a closed subspace of
$\widetilde{H}.$  Particularly, the nuclear
      $\left\|\cdot\right\|_{L^{1}(\widetilde{H})},$
and the Hilbert--Schmidt
$\left\|\cdot\right\|_{\mathcal{S}(\widetilde{H})}$ operator
     norms   on $\widetilde{H}$  are defined as follows:
\begin{eqnarray}\left\|\mathcal{A}\right\|_{L^{1}(\widetilde{H})}&=&
\sum_{k\geq 1}\left\langle \left[\mathcal{A}^{\star}
\mathcal{A}\right]^{1/2}(\psi_{k}),\psi_{k}
\right\rangle_{\widetilde{H}}
\nonumber\\
\left\|\mathcal{A}\right\|_{\mathcal{S}(\widetilde{H})} &=&
\left[\sum_{k\geq 1}\left\langle
\mathcal{A}^{\star}\mathcal{A}(\psi_{k}),\psi_{k}
\right\rangle_{\widetilde{H}}\right]^{1/2}= \sqrt{
\left\|\mathcal{A}^{\star}\mathcal{A}\right\|_{L^{1}(\widetilde{H})}},
\label{HSN}\end{eqnarray}
\noindent with $\{\psi_{k},\ k\geq 1\}$ being  an orthonormal  basis of $\widetilde{H}$ as given in (\ref{ip0}).

We denote by $\|\cdot\|_{\mathcal{L}(\widetilde{H})}$     the  norm
in the space of bounded linear operators on $\widetilde{H},$ i.e.,
 $\|\mathcal{A}\|_{\mathcal{L}(\widetilde{H})}=
 \sup_{\psi\in \widetilde{H};\ \|\psi\|=1}
 \|\mathcal{A}(\psi)\|_{\widetilde{H}}.$ This norm is also usually referred
 as the operator norm  (or uniform operator norm).  Through the paper we consider the equality between operators on $\widetilde{H}$  (respectively, on $H$) in
 the norm of the space $\mathcal{L}(\widetilde{H})$ (respectively, of the space $\mathcal{L}(H)$)  implying the pointwise identity of such operators over the functions on $\widetilde{H}$ (respectively on $H$). Otherwise, the norm with respect to which the identity considered holds is established.

 For simplicity of
 notation, in the subsequent development, the letter $\mathcal{K}$
 will refer to a positive  constant whose specific value  may vary from
one to another  inequality or identity.

Let  $\{X_{t},\ t\in \mathbb{Z}\}$ be a strictly stationary
functional time series with zero mean $E\left[X_{t}\right]=0,$ and functional variance  $\sigma_{X}^{2}=E[\|X_{t}\|^{2}_{H}]=
E[\|X_{0}\|^{2}_{H}]=\|R_{0}\|_{L^{1}(H)},$ for every $t\in \mathbb{Z}.$ Also,
\begin{eqnarray}&&
\mathcal{R}_{t}=E[X_{s+t}\otimes X_{s}]=\quad
\forall t,s\in \mathbb{Z}\label{covstatfunct}\\&&
\mathcal{R}_{t}(g)(h)= E[X_{s+t}(h)X_{s}(g)]= E\left[\left\langle
X_{s+t},h\right\rangle_{H}\left\langle X_{s},g\right\rangle_{H}
\right],\quad \forall h,g\in H.\nonumber\\
\label{covstatfunct2}
\end{eqnarray}

     Note that,
  $E[\|X_{0}\|^{2}_{H}]<\infty $ implies
  $P[X_{t}\in H]=1,$
  for all $t\in \mathbb{Z}.$

  Let  $\mathcal{F}_{\omega}$ be the spectral density operator
  on $\widetilde{H},$
  defined by the following identity in the
  $\mathcal{L}(\widetilde{H})$ norm, for $\omega \in [-\pi,\pi]\backslash \{0\}:$
  \begin{equation}\mathcal{F}_{\omega}
  \underset{\mathcal{L}(\widetilde{H})}{=}
  \frac{1}{2\pi} \sum_{t\in \mathbb{Z}}\exp\left(-i\omega t\right)
\mathcal{R}_{t}.\label{sdo2}\end{equation}

\begin{remark} In \cite{Panaretos13},   convergence of
series  (\ref{sdo2}) holds  in the nuclear norm  for SRD functional sequences.  Here, a weaker convergence is assumed.
Indeed,   identity (\ref{sdo2}) could hold  for $\omega \in
[-\pi,\pi]\backslash \Lambda_{0},$ where  $\int_{\Lambda_{0}}d\omega =0.$  In our case,
$\Lambda_{0}= \{0\}$ for the characterization of LRD in  \textbf{Assumption II} below.
\end{remark}

For simplicity, in the following, we will omit the reference to the
set $[-\pi,\pi]\backslash \Lambda_{0},$ when the identities  hold
almost surely in the frequency domain. That is the case of the identities for a spectral density
operator family involving an unbounded spectral density operator   at
zero--frequency  (see equation  (\ref{sosd}) below).

 The functional Discrete Fourier Transform  (fDFT)   $\widetilde{X}^{(T)}$  of the
functional data $\{X_{t},\ t=1,\dots,T\}$ is defined as
\begin{equation}\widetilde{X}^{(T)}_{\omega }(\cdot)
\underset{\widetilde{H}}{=} \frac{1}{\sqrt{2\pi
T}}\sum_{t=1}^{T}X_{t}(\cdot )\exp\left(-i\omega t\right), \quad
\omega\in [-\pi ,\pi],\label{fDFT}\end{equation}\noindent
 where $\underset{\widetilde{H}}{=}$ denotes the equality
in $\widetilde{H}$ norm.  Hence, $\widetilde{X}^{(T)}_{\omega }$ is
$2\pi$--periodic
 and Hermitian with  respect to  $\omega
\in [-\pi ,\pi].$
\begin{remark}\label{rem2}
Under the condition  $E[\|X_{0}\|^{2}_{\widetilde{H}}]<\infty,$ applying triangle inequality,
$$E\left[\|\widetilde{X}_{\omega}^{(T)}\|_{\widetilde{H}}\right]\leq \frac{1}{\sqrt{2\pi
T}}\sum_{t=1}^{T}E\|X_{t}(\cdot )\|_{\widetilde{H}}<\infty,$$ \noindent for every $\omega \in [-\pi,\pi].$
  The fDFT $\widetilde{X}_{\omega}^{(T)}$ defines a random element
   in $\widetilde{H},$  and   $P\left[ \widetilde{X}^{(T)}_{\omega }(\cdot)
   \in \widetilde{H}\right]=1.$ Hence, $\mathcal{F}_{\omega }^{(T)}=
   E\left[\widetilde{X}_{\omega}^{(T)}\otimes
   \overline{\widetilde{X}_{\omega}^{(T)}}\right]\in L^{1}(\widetilde{H}),$
   for $\omega \in [-\pi,\pi].$
\end{remark}

 The periodogram operator
$p_{\omega }^{(T)}=\widetilde{X}_{\omega}^{(T)}\otimes
\overline{\widetilde{X}_{\omega}^{(T)}} $ is  an
empirical operator,
 with mean
 $E[p_{\omega }^{(T)}]=E[\widetilde{X}_{\omega}^{(T)}
 \otimes \widetilde{X}_{-\omega}^{(T)}]=\mathcal{F}_{\omega }^{(T)}.$
 Particularly, under (\ref{sdo2}), for any $T\geq 2,$
 the following  identity holds  in $\mathcal{L}(\widetilde{H}):$
\begin{eqnarray}&&
\mathcal{F}_{\omega }^{(T)}= E\left[p_{\omega
}^{(T)}\right]=\frac{1}{2\pi
T}\left[\sum_{t=1}^{T}\sum_{s=1}^{T}\exp\left(-i\omega
(t-s)\right)E[X_{t}\otimes X_{s}]\right]\nonumber\\
&&=\frac{1}{2\pi }\sum_{u=-(T-1)}^{T-1}\exp\left(-i\omega u\right)
\frac{(T-|u|)}{T}\mathcal{R}_{u}.
\label{meanperiodogram}
\end{eqnarray}

 Let $F_{T}$ be the
F\'ejer kernel, given by
\begin{equation}F_{T}(\omega )=\frac{1}{T}\sum_{t=1}^{T}
\sum_{s=1}^{T}\exp\left(-i(t-s)\omega \right),\quad \omega \in
[-\pi,\pi],\quad T\geq 2.\label{eqfkd}
\end{equation}
 Applying  Fourier Transform Inversion Formula in the space
 $\mathcal{L}(\widetilde{H}),$ from
equations (\ref{meanperiodogram}) and  (\ref{eqfkd}), for each
$\omega \in [-\pi,\pi],$
\begin{eqnarray}&&\hspace*{-2cm} \mathcal{F}_{\omega }^{(T)}
= \left[F_{T}*\mathcal{F}_{\bullet
}\right](\omega )\nonumber\\ &&=
\int_{-\pi}^{\pi} F_{T}(\omega - \xi)
\mathcal{F}_{\xi} d\xi,\quad T\geq 2.
 \label{eqconv}
\end{eqnarray}

\subsection{Preliminaries on spectral analysis of self--adjoint operators}
\label{smo}

This section presents some preliminary elements on spectral theory of self--adjoint operators on a separable Hilbert space  (see, e.g., \cite{Dautray85},
 pp. 112--140).

 It is well--known that, for a self--adjoint operator $\mathcal{D}$ on a separable Hilbert space
$\widetilde{H},$ there exists a family of projection operators $\{E_{\lambda
},\ \lambda \in \Lambda \subseteq \mathbb{R}\},$ also called the spectral family of $\mathcal{D},$
such that the following identity holds:
\begin{equation}
\mathcal{D}=\int_{\Lambda }\lambda dE_{\lambda }.
\label{idws}
\end{equation}
   This family of projection operators satisfies the following properties:
\begin{itemize}
\item[(i)] $E_{\lambda }E_{\mu }=E_{\mbox{inf}\{\lambda ,\mu\}};$
 \item[(ii)] $\lim_{\widetilde{\lambda }\to \lambda;\ \widetilde{\lambda }
 \geq \lambda }E_{\widetilde{\lambda }}=E_{\lambda };$
\item[(iii)]
 $\lim_{\lambda\to -\infty}E_{\lambda }=0;$ $\lim_{\lambda\to \infty}
 E_{\lambda }=I_{\widetilde{H}},$
 \noindent where  $I_{\widetilde{H}}$ denotes the identity operator on $\widetilde{H}.$
\item[(iv)] The domain of $\mathcal{D}$ is defined as
\begin{equation}\mbox{Dom}(\mathcal{D})=\left\{h\in \widetilde{H}:\ \int_{\Lambda }|\lambda |^{2} d\left\langle E_{\lambda }(h),h\right\rangle_{\widetilde{H}}<\infty \right\}.\label{idws3}
\end{equation}
\item[(v)] A  function $G(\mathcal{D})$ admits the representation
\begin{equation}G(\mathcal{D})=\int_{\Lambda }G(\lambda )dE_{\lambda },\label{idws2}
\end{equation}
\noindent and $$\mbox{Dom}(G(\mathcal{D}))=\left\{h\in \widetilde{H}:\ \int_{\Lambda }\left|G(\lambda )\right|^{2} d\left\langle E_{\lambda }(h),h\right\rangle_{\widetilde{H}}<\infty \right\}.$$

The operator integrals (\ref{idws})  and (\ref{idws2})  are understood as improper operator Stieltjes integrals which converge strongly (see, e.g., Section 8.2.1 in  \cite{Ramm05}). Let $\Delta =(a,b],$ $-\infty<a<b<\infty,$  $E_{\Delta }:= E_{b}-E_{a}.$

The family $E_{\Delta }$ of self--adjoint bounded non--negative  operators from the set of Borel sets $\Delta \subseteq \mathbb{R}$ into the space $\mathcal{L}(\widetilde{H})$  of bounded linear operators on a Hilbert space $\widetilde{H}$ is called  an operator measure if $$E_{\left[\cup_{j=1}^{\infty}\Delta_{j}\right]}=\sum_{j=1}^{\infty}E_{\Delta_{j}},$$
\noindent where the limit at the right--hand side is understood  in the sense of weak--convergence of operators, with $\Delta_{i}\cap \Delta_{j}=\varnothing,\ i\neq j,\ E_{\varnothing}=0.
$
\end{itemize}

From (iii),  for every $g,h\in \widetilde{H},$
 \begin{eqnarray}&&\int_{\Lambda }d\left\langle E_{\lambda }(g), h
 \right\rangle_{\widetilde{H}}=\left\langle g,h\right\rangle_{\widetilde{H}}
 \nonumber\\
&&\int_{\Lambda }d\left\langle E_{\lambda }(h), h
 \right\rangle_{\widetilde{H}}=     \left\|h\right\|_{\widetilde{H}}^{2}.\label{initwofunc}
 \end{eqnarray}
 \noindent  Thus, $\{E_{\lambda },\
 \lambda \in \Lambda \subseteq
 \mathbb{R}\}$ provides a resolution of the identity.

Note that,  from (\ref{idws}) (see (i)--(v)),  for  every $\psi \in \mbox{Dom}(\mathcal{D})\subseteq \widetilde{H},$
 \begin{eqnarray}
    \|\mathcal{D}(\psi)\|_{\widetilde{H}}^{2}&=& \left\langle \mathcal{D}(\psi),\mathcal{D}(\psi)\right\rangle_{\widetilde{H}}
 =\left\langle \mathcal{D}\mathcal{D}(\psi),\psi\right\rangle_{\widetilde{H}}=\int_{\Lambda }|\lambda |^{2}d\left\langle E_{\lambda }(\psi),\psi\right\rangle_{\widetilde{H}}\nonumber\\
 &=&\left\langle \left[\mathcal{D}\left(\int_{\Lambda }\lambda d E_{\lambda }\right)\right](\psi),\psi\right\rangle_{\widetilde{H}}
 =\left\langle \left[\left(\int_{\Lambda }\lambda d E_{\lambda }\right)\mathcal{D}\right](\psi),\psi\right\rangle_{\widetilde{H}}. \nonumber\\
 \label{ip}
 \end{eqnarray}

If  $\mathcal{D}\in \mathcal{L}(\widetilde{H}),$  hence,  equation  (\ref{idws3}) holds  for every
   $\psi \in \widetilde{H},$ and from (\ref{ip}),
\begin{eqnarray}
& &    \left\|\left[\mathcal{D}-\int_{\Lambda }\lambda d E_{\lambda }\right](\psi)\right\|_{\widetilde{H}}^{2}=\left\langle \mathcal{D}(\psi)-\int_{\Lambda }\lambda d E_{\lambda }(\psi), \mathcal{D}(\psi)-\int_{\Lambda }\lambda d E_{\lambda }(\psi)\right\rangle_{\widetilde{H}}\nonumber\\
&&\hspace*{-1cm}=\int_{\Lambda }|\lambda |^{2}d\left\langle E_{\lambda }(\psi),\psi\right\rangle_{\widetilde{H}}+\int_{\Lambda }|\lambda |^{2}d\left\langle E_{\lambda }(\psi),\psi\right\rangle_{\widetilde{H}}
-2\int_{\Lambda }|\lambda |^{2}d\left\langle E_{\lambda }(\psi),\psi\right\rangle_{\widetilde{H}}
=0.\nonumber\\
 \label{bon2}
    \end{eqnarray}
\noindent Thus,  $\left\|\mathcal{D}-\int_{\Lambda }\lambda d E_{\lambda }\right\|_{\mathcal{L}(\widetilde{H})}=0,$
and  the weak--sense representation (\ref{idws}) also holds in $\mathcal{L}(\widetilde{H})$--norm.
In particular, for $\mathcal{D}\in
L_{0}(\widetilde{H}),$ with $L_{0}(\widetilde{H})$ denoting the
class of compact operators on  $\widetilde{H},$ the mapping $\lambda
\longrightarrow E_{\lambda}$ has discontinuities at the points given
by the eigenvalues $\{\lambda_{k}(\mathcal{D}),\ k\geq 1\},$ with
$$E_{\lambda_{k}}-\lim_{\widetilde{\lambda}\to
\lambda_{k}(\mathcal{D});\
\widetilde{\lambda}<\lambda_{k}(\mathcal{D})}E_{\widetilde{\lambda}}=
P_{k},$$ \noindent where $P_{k}$ is the projection operator onto the eigenspace
generated by the eigenvectors associated with the eigenvalue
$\lambda_{k}(\mathcal{D}),$ for every $k\geq 1.$

Let now consider the following assumption:

\medskip

\noindent \textbf{Assumption I}. Assume that
\begin{equation}
\int_{-\pi}^{\pi}\left\|\mathcal{F}_{\omega }\right\|_{\mathcal{L}(\widetilde{H})}d\omega <\infty.\label{lhni}\end{equation}

\medskip

\begin{remark} \textbf{Assumption I} holds, for instance, when  the family  $\{ \mathcal{F}_{\omega },\ \omega \in [-\pi,\pi]\}$ is a.s. continuous in $\omega \in [-\pi,\pi],$ with respect to $\mathcal{L}(\widetilde{H})$--norm, since applying  reverse triangle inequality,  $\|\mathcal{F}_{\omega }\|_{\mathcal{L}(\widetilde{H})}$ is  a.s. continuous  in $\omega \in [-\pi,\pi].$

\end{remark}
\begin{remark}
\label{rem1b}
Note that, under \textbf{Assumption I},  for every $t\in \mathbb{Z},$
\begin{eqnarray}
&&\left\|\mathcal{R}_{t}\right\|_{\mathcal{L}(\widetilde{H})}
=\left\|\int_{-\pi}^{\pi}\exp\left( i\omega t\right)\mathcal{F}_{\omega }d\omega \right\|_{\mathcal{L}(\widetilde{H})}\nonumber\\
&&\hspace*{2cm}\leq \int_{-\pi}^{\pi}\left\|\mathcal{F}_{\omega }\right\|_{\mathcal{L}(\widetilde{H})}d\omega <\infty.
\label{eqbonsdo}
\end{eqnarray}

\end{remark}
 The next preliminary result will be applied in the subsequent development.

\begin{lemma}
\label{lempo}  Under \textbf{Assumption I},
 \begin{eqnarray}&&\sum_{t\in
\mathbb{Z}}\|\mathcal{R}_{t}\|_{\mathcal{S}(\widetilde{H})}^{2}
 =\int_{-\pi}^{\pi}\left\|\mathcal{F}_{\omega
}\right\|_{\mathcal{S}(\widetilde{H})}^{2}d\omega <\infty .
\label{eqfcpr1}
\end{eqnarray}
\end{lemma}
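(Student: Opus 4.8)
The plan is to recognize the claimed identity as Parseval's (Plancherel) relation for a square--integrable function taking values in the Hilbert space $\mathcal{S}(\widetilde{H})$ of Hilbert--Schmidt operators, equipped with the inner product $\langle\mathcal{A},\mathcal{B}\rangle_{\mathcal{S}(\widetilde{H})}=\sum_{k\geq 1}\langle\mathcal{A}(\psi_{k}),\mathcal{B}(\psi_{k})\rangle_{\widetilde{H}}$. Indeed, by the inversion formula recalled in Remark \ref{rem1b}, $\mathcal{R}_{t}=\int_{-\pi}^{\pi}\exp(i\omega t)\mathcal{F}_{\omega}\,d\omega$, so the operators $\{\mathcal{R}_{t},\ t\in\mathbb{Z}\}$ are, up to the normalizing constant fixed by the Fourier convention in (\ref{sdo2}), the Fourier coefficients of the $\mathcal{S}(\widetilde{H})$--valued map $\omega\longmapsto\mathcal{F}_{\omega}$ with respect to the orthonormal system $\{\exp(i\omega t),\ t\in\mathbb{Z}\}$ of $L^{2}([-\pi,\pi])$. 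The whole identity should then reduce to Parseval's theorem applied entrywise and resummed.

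I would make this rigorous through the matrix entries $f_{kl}(\omega)=\langle\mathcal{F}_{\omega}(\psi_{k}),\psi_{l}\rangle_{\widetilde{H}}$ relative to the basis (\ref{ip0}). First I expand both Hilbert--Schmidt norms via the definition (\ref{HSN}), writing $\|\mathcal{R}_{t}\|^{2}_{\mathcal{S}(\widetilde{H})}=\sum_{k,l}|\langle\mathcal{R}_{t}(\psi_{k}),\psi_{l}\rangle_{\widetilde{H}}|^{2}$ and $\|\mathcal{F}_{\omega}\|^{2}_{\mathcal{S}(\widetilde{H})}=\sum_{k,l}|f_{kl}(\omega)|^{2}$. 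For each fixed pair $(k,l)$, the inversion formula gives $\langle\mathcal{R}_{t}(\psi_{k}),\psi_{l}\rangle_{\widetilde{H}}=\int_{-\pi}^{\pi}\exp(i\omega t)f_{kl}(\omega)\,d\omega$, which is (a constant multiple of) the $t$--th Fourier coefficient of the scalar function $f_{kl}\in L^{2}([-\pi,\pi])$, to which the classical scalar Parseval identity applies. Summing the resulting scalar identities over $(k,l)$ and interchanging $\sum_{t}$, $\sum_{k,l}$ and $\int_{-\pi}^{\pi}$ by Tonelli's theorem (all summands being non--negative) produces the asserted equality between $\sum_{t}\|\mathcal{R}_{t}\|^{2}_{\mathcal{S}(\widetilde{H})}$ and $\int_{-\pi}^{\pi}\|\mathcal{F}_{\omega}\|^{2}_{\mathcal{S}(\widetilde{H})}\,d\omega$ (with the normalizing constant dictated by the convention in (\ref{sdo2})), as an identity in $[0,+\infty]$ requiring no prior finiteness.

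It remains to show that this common value is finite, which is where \textbf{Assumption I} enters. I would bound the Hilbert--Schmidt norm by the product of the operator and nuclear norms: since $\mathcal{F}_{\omega}$ is self--adjoint and non--negative, its ordered eigenvalues $\lambda_{j}(\omega)\geq 0$ satisfy $\|\mathcal{F}_{\omega}\|^{2}_{\mathcal{S}(\widetilde{H})}=\sum_{j}\lambda_{j}(\omega)^{2}\leq\big(\sup_{j}\lambda_{j}(\omega)\big)\sum_{j}\lambda_{j}(\omega)=\|\mathcal{F}_{\omega}\|_{\mathcal{L}(\widetilde{H})}\,\|\mathcal{F}_{\omega}\|_{L^{1}(\widetilde{H})}$. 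The second--order property of $\{X_{t}\}$ controls the nuclear factor after integration, since $\int_{-\pi}^{\pi}\|\mathcal{F}_{\omega}\|_{L^{1}(\widetilde{H})}\,d\omega=\int_{-\pi}^{\pi}\mathrm{tr}(\mathcal{F}_{\omega})\,d\omega=\mathrm{tr}(\mathcal{R}_{0})=\sigma_{X}^{2}<\infty$, while \textbf{Assumption I} controls the operator--norm factor.

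I expect the main obstacle to lie precisely in this last step: combining the two integrability facts into integrability of the \emph{product} $\|\mathcal{F}_{\omega}\|_{\mathcal{L}(\widetilde{H})}\,\|\mathcal{F}_{\omega}\|_{L^{1}(\widetilde{H})}$ on a neighbourhood of the singular frequency $\omega=0$, where $\mathcal{F}_{\omega}$ is unbounded under LRD. Controlling the interaction of these two singular rates, rather than each separately, is the delicate point, and I would handle it by exploiting the specific structure imposed on $\mathcal{F}_{\omega}$ near zero, together with a careful application of the reverse triangle inequality to transfer the almost--everywhere frequency--domain identities to genuine Lebesgue integrals over $[-\pi,\pi]\setminus\{0\}$. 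By contrast, the entrywise Parseval computation and the Tonelli interchange are routine once the Hilbert--space structure of $\mathcal{S}(\widetilde{H})$ is in place.
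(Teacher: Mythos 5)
Your Parseval computation coincides with the paper's own argument: the paper likewise passes to the matrix entries $\left\langle\mathcal{F}_{\omega}(\psi_{k}),\psi_{l}\right\rangle_{\widetilde{H}}$, invokes the entrywise inversion formula (\ref{lhni2}), and resums over $(k,l)$ and $t$ (it does so via the formal identity $\sum_{t}\exp\left(it(\omega-\xi)\right)=\delta(\omega-\xi)$, which your scalar--Parseval--plus--Tonelli phrasing is the rigorous version of). Up to the identity $\sum_{t}\|\mathcal{R}_{t}\|^{2}_{\mathcal{S}(\widetilde{H})}=\int_{-\pi}^{\pi}\|\mathcal{F}_{\omega}\|^{2}_{\mathcal{S}(\widetilde{H})}d\omega$, understood as an equality in $[0,+\infty]$, your route and the paper's are the same.

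The genuine gap is the finiteness claim, and you stop exactly where the difficulty lies. You reduce matters to $\|\mathcal{F}_{\omega}\|^{2}_{\mathcal{S}(\widetilde{H})}\leq\|\mathcal{F}_{\omega}\|_{\mathcal{L}(\widetilde{H})}\,\|\mathcal{F}_{\omega}\|_{L^{1}(\widetilde{H})}$ together with the separate integrability of each factor, and then defer the integrability of the \emph{product} to "the specific structure of $\mathcal{F}_{\omega}$ near zero" and "a careful application of the reverse triangle inequality" without carrying anything out; since the product of two functions each in $L^{1}([-\pi,\pi])$ need not be integrable, this is not a proof. The paper closes the step differently: it writes $\int_{-\pi}^{\pi}\|\mathcal{F}_{\omega}\|^{2}_{\mathcal{S}(\widetilde{H})}d\omega=\int_{-\pi}^{\pi}\|\mathcal{F}_{\omega}\mathcal{F}_{\omega}\|_{L^{1}(\widetilde{H})}d\omega\leq\int_{-\pi}^{\pi}\|\mathcal{F}_{\omega}\|_{L^{1}(\widetilde{H})}d\omega=\|\mathcal{R}_{0}\|_{L^{1}(\widetilde{H})}=\sigma_{X}^{2}$, i.e.\ it discards one factor of $\mathcal{F}_{\omega}$ outright instead of estimating a product. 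That inequality, $\sum_{j}\lambda_{j}(\omega)^{2}\leq\sum_{j}\lambda_{j}(\omega)$, itself requires $\|\mathcal{F}_{\omega}\|_{\mathcal{L}(\widetilde{H})}\leq 1$, so your suspicion that the interaction of the two singular rates at $\omega=0$ is the crux is well founded: in the scalar case $\mathcal{F}_{\omega}=|\omega|^{-\alpha}$ with $\alpha\in(1/2,1)$, which is admissible under \textbf{Assumptions I} and \textbf{IV(i)}, one has $\int_{-\pi}^{\pi}\|\mathcal{F}_{\omega}\|_{\mathcal{L}(\widetilde{H})}d\omega<\infty$ while $\int_{-\pi}^{\pi}\|\mathcal{F}_{\omega}\|^{2}_{\mathcal{S}(\widetilde{H})}d\omega=\infty$. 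The finiteness therefore cannot follow from \textbf{Assumption I} alone, and no refinement of the triangle inequality will supply it; an additional hypothesis (e.g.\ $L_{\alpha}(\theta)<1/2$, uniform boundedness of $\|\mathcal{F}_{\omega}\|_{\mathcal{L}(\widetilde{H})}$, or square integrability assumed directly) is needed to complete your argument.
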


\begin{proof}

Given an orthonormal basis $\{\psi_{k},\ k\geq 1\}$ of $\widetilde{H},$
under  \textbf{Assumption I},    $\mathcal{F}_{\omega }$ is  a.s. bounded  in $\omega \in [-\pi,\pi].$ In particular,
  \begin{eqnarray}&&\int_{-\pi}^{\pi}\left\langle\mathcal{F}_{\omega
}(\psi_{k}),\psi_{l}\right\rangle_{\widetilde{H}}d\omega \leq \|\psi_{l}\|_{\widetilde{H}}\int_{-\pi}^{\pi}\left\|\mathcal{F}_{\omega
}(\psi_{k})\right\|_{\widetilde{H}}d\omega \nonumber\\
&&\hspace*{2cm} \leq \int_{-\pi}^{\pi}\left\|\mathcal{F}_{\omega
}\right\|_{\mathcal{L}(\widetilde{H})}d\omega <\infty,\label{fico}\end{eqnarray}\noindent   for every $k,l\geq 1.$  Hence, from (\ref{sdo2}) and (\ref{fico}), for every $t\in \mathbb{Z},$
\begin{equation}\int_{-\pi}^{\pi}\exp\left(it\omega \right)\left\langle\mathcal{F}_{\omega
}(\psi_{k}),\psi_{l}\right\rangle_{\widetilde{H}}d\omega =\left\langle\mathcal{R}_{t}(\psi_{k}),\psi_{l}\right\rangle_{\widetilde{H}}, \quad k,l\geq 1. \label{lhni2}\end{equation}

From (\ref{lhni2}), applying Fourier transform inversion formula, \begin{eqnarray}&& \sum_{t\in
\mathbb{Z}}\|\mathcal{R}_{t}\|_{\mathcal{S}(H)}^{2}=\sum_{t\in
\mathbb{Z}}\sum_{k,l\geq 1}|\mathcal{R}_{t}(\psi_{k})(\psi_{l})|^{2}\nonumber\\
&&=\sum_{t\in
\mathbb{Z}}\sum_{k,l\geq 1}\int_{-\pi}^{\pi}\int_{-\pi}^{\pi}\exp\left(it(\omega-\xi)\right)\mathcal{F}_{\omega}(\psi_{k})(\psi_{l})
\overline{\mathcal{F}_{\xi}(\psi_{k})(\psi_{l})} d\xi d\omega\nonumber \\ &&=\sum_{k,l\geq 1}\int_{-\pi}^{\pi}\int_{-\pi}^{\pi}\left[\sum_{t\in
\mathbb{Z}}\exp\left(it(\omega-\xi)\right)\right]\mathcal{F}_{\omega}(\psi_{k})(\psi_{l})
\overline{\mathcal{F}_{\xi}(\psi_{k})(\psi_{l})} d\xi d\omega\nonumber\\
&&=\sum_{k,l\geq 1}\int_{-\pi}^{\pi}\int_{-\pi}^{\pi}\delta (\omega-\xi)\mathcal{F}_{\omega}(\psi_{k})(\psi_{l})
\overline{\mathcal{F}_{\xi}(\psi_{k})(\psi_{l})} d\xi d\omega \nonumber\\
&&=\int_{-\pi}^{\pi}\sum_{k,l\geq 1}\left|\mathcal{F}_{\omega}(\psi_{k})(\psi_{l})\right|^{2}d\omega= \int_{-\pi}^{\pi}\left\|\mathcal{F}_{\omega
}\right\|_{\mathcal{S}(\widetilde{H})}^{2}d\omega.
\label{eqfcpr1}\end{eqnarray}

From equations   (\ref{lhni2})   and   (\ref{eqfcpr1}), keeping in mind that $\mathcal{F}_{\omega
}$ is nonnegative symmetric operator,
\begin{eqnarray}&& \sum_{t\in
\mathbb{Z}}\|\mathcal{R}_{t}\|_{\mathcal{S}(H)}^{2}=
 \int_{-\pi}^{\pi}\left\|\mathcal{F}_{\omega
}\right\|_{\mathcal{S}(\widetilde{H})}^{2}d\omega=\int_{-\pi}^{\pi}\left\|\mathcal{F}_{\omega
}\mathcal{F}_{\omega
}\right\|_{L^{1}(\widetilde{H})}d\omega
\nonumber\\
&&
\leq
\int_{-\pi}^{\pi}\left\|\mathcal{F}_{\omega
}\right\|_{L^{1}(\widetilde{H})}d\omega
=\int_{-\pi}^{\pi}\sum_{k\geq 1}\left\langle \left[\mathcal{F}_{\omega
}^{\star}\mathcal{F}_{\omega
}\right]^{1/2}(\psi_{k}),\psi_{k}\right\rangle_{\widetilde{H}}d\omega
\nonumber\\
&&=\sum_{k\geq 1}\int_{-\pi}^{\pi}\left\langle\mathcal{F}_{\omega
}(\psi_{k}),\psi_{k}\right\rangle_{\widetilde{H}}d\omega =\sum_{k\geq 1}\left\langle\mathcal{R}_{0}(\psi_{k}),\psi_{k}\right\rangle_{\widetilde{H}}
\nonumber\\
&&=\|\mathcal{R}_{0}\|_{L^{1}(\widetilde{H})}=\sigma^{2}_{X}<\infty.
\label{eqfcpr12}\end{eqnarray}

\end{proof}

\begin{remark}
 \label{re000}
Under \textbf{Assumption I}, from Lemma \ref{lempo},
 $\mathcal{F}_{\omega }\in \mathcal{S}(\widetilde{H}),$
  for $\omega \in [-\pi ,\pi]\backslash\Lambda _{0},$
  with, as before,  $\int_{\Lambda _{0}}d\omega =0.$
     Also, $\left\|\mathcal{F}_{\omega }\right\|_{\mathcal{S}
  (\widetilde{H})}\in L^{2}([-\pi,\pi], \mathbb{C}).$

\end{remark}

\section{Spectral analysis of LRD functional time series}
\label{LRDlfs}

As commented in the Introduction, the literature on
LRD modeling in functional sequences has been mainly
developed in the time domain, under  the context of linear processes
in Hilbert spaces
 (see, e.g., \cite{duker18}; \cite{LiRobinsonShang19}; \cite{Rackauskas};
 \cite{Rackauskasv2}), paying
 special attention  to the theory of
 operator self--similar processes (see
 \cite{Charac14}; \cite{Laha62}; \cite{Lamperti62}; \cite{Matache06},
 among others).

 The next condition characterizes the unbounded behavior at zero frequency of the spectral density operator family.

 \medskip

 \noindent \textbf{Assumption II}. Let $\left\{ \mathcal{A}_{\theta },\ \theta \in \Theta \right\}$
  be a   parametric  family  of positive bounded self--adjoint long--memory operators,
   with $\Theta $ denoting the parameter space.
 For each $\theta\in \Theta,$ assume that    as $\omega \to 0$:
 \begin{equation}
 \left\|\mathcal{F}_{\omega ,\theta }
  |\omega|^{\mathcal{A}_{\theta }}\mathcal{M}_{\omega, \mathcal{F}}^{-1}-I_{\widetilde{H}}\right\|_{\mathcal{L}(\widetilde{H})}\to 0,
  \label{sosd}
\end{equation}
\noindent where  $I_{\widetilde{H}}$ denotes the identity operator on  $\widetilde{H},$ and
$\{\mathcal{M}_{\omega, \mathcal{F}},\ \omega \in [-\pi,\pi]\}$ is a  family of  bounded positive self--adjoint  operators.

 \medskip

 For $\omega \in [-\pi,\pi]$ and $\theta \in \Theta ,$ the spectral representation of $\mathcal{M}_{\omega, \mathcal{F}}$ and  $\mathcal{A}_{\theta }$  in terms of a common spectral family  $\left\{E_{\lambda },\ \lambda \in \Lambda \right\}$ of projection operators (see Section \ref{smo}) is considered in the next assumption.

     \medskip

     \noindent \textbf{Assumption III}.   Assume that  $\mathcal{A}_{\theta}$ and $\mathcal{M}_{\omega, \mathcal{F}}$
   admit the following spectral representations:
    \begin{eqnarray}
    \mathcal{A}_{\theta} &\underset{\mathcal{L}(\widetilde{H})}{=}&\int_{\Lambda }\alpha (\lambda , \theta)d E_{\lambda },\quad \theta \in \Theta \nonumber\\
    \mathcal{M}_{\omega, \mathcal{F}}&\underset{\mathcal{L}(\widetilde{H})}{=}&\int_{\Lambda }M_{\omega , \mathcal{F}}(\lambda )
    d E_{\lambda },\quad  \omega \in [-\pi,\pi].
    \label{ssfdo}
    \end{eqnarray}

  We refer to $\{\alpha (\lambda ,\theta ),\ \lambda \in \Lambda\}$
and $\{M_{\omega, \mathcal{F} }(\lambda ),\ \lambda \in \Lambda\}$ as the
respective symbols of the self--adjoint operators
$\mathcal{A}_{\theta }$ and $\mathcal{M}_{\omega , \mathcal{F} }.$

    \medskip

\begin{remark}\label{remcofso}
   From (\ref{ssfdo}), operators $|\omega |^{-\mathcal{A}_{\theta }}$
   and $\mathcal{M}_{\omega ,\mathcal{F}}$ commute,  for  any  $\theta  \in \Theta ,$
   and $\omega \in [-\pi,\pi]\backslash \Lambda_{0}$
     (see, e.g., \cite{Dautray85}).  \end{remark}

Under   \textbf{Assumptions II--III},  as $\omega \to 0,$

\begin{eqnarray}&&
\left\|\mathcal{F}_{\omega ,\theta
}\int_{\Lambda }\frac{|\omega
|^{\alpha (\lambda ,\theta )}}{M_{\omega ,\mathcal{F}}(\lambda )}dE_{\lambda}-I_{\widetilde{H}}\right\|_{\mathcal{L}(\widetilde{H})}\to 0
,\quad  \forall  \theta \in \Theta.
  \label{sfasdo}
 \end{eqnarray}

\medskip

\noindent \textbf{Assumption IV}.    $\{\alpha (\lambda ,\theta ),\ \lambda \in \Lambda\}$
and $\{M_{\omega, \mathcal{F} }(\lambda ),\ \lambda \in \Lambda\}$ satisfy:
\begin{itemize}
\item[(i)]
  For   $(\lambda ,\theta )\in \Lambda\times \Theta,$   there exist $l_{\alpha }(\theta )$ and
$ L_{\alpha }(\theta )$ such that
\begin{eqnarray}&& 0< l_{\alpha }(\theta )
 \leq  \alpha (\lambda ,\theta )\leq  L_{\alpha }(\theta )< 1\nonumber\\
&& l_{\alpha }(\theta )=
  \inf_{\psi\in \widetilde{H};\
  \|\psi\|_{\widetilde{H}}=1}\left\langle \mathcal{A}_{\theta }(\psi),
   \psi\right\rangle_{\widetilde{H}},\
   L_{\alpha }(\theta )=
   \sup_{\psi\in \widetilde{H};\ \|\psi\|_{\widetilde{H}}=1}
   \left\langle \mathcal{A}_{\theta }(\psi),\psi
   \right\rangle_{\widetilde{H}}.\nonumber\\ \label{disa}
 \end{eqnarray}
\item[(ii)]  For each $\lambda_{0} \in \Lambda ,$ $ M_{\omega, \mathcal{F} }(\lambda_{0} )$ is slowly varying function at $\omega =0$
in the Zygmund's sense (see, e.g., Definition 6.6 in \cite{Beran17}). Furthermore, we also assume
that there   exist positive constants $m$ and $M$ such that, for every $\omega \in [-\pi,\pi],$
\begin{eqnarray}&& m\leq
M_{\omega ,\mathcal{F} }(\lambda )\leq M,\quad \forall \lambda \in \Lambda\label{dbnpp}\\
&&  m<\inf_{\psi\in \widetilde{H};\
   \|\psi\|_{\widetilde{H}}=1}\left\langle \mathcal{M}_{\omega ,\mathcal{F} }(\psi),
   \psi\right\rangle_{\widetilde{H}}<  \sup_{\psi\in \widetilde{H};\
    \|\psi\|_{\widetilde{H}}=1}\left\langle \mathcal{M}_{\omega ,\mathcal{F} }(\psi),
    \psi\right\rangle_{\widetilde{H}}<M.\nonumber
\end{eqnarray}

\end{itemize}

     \subsection{LRD characterization in the time domain}
     \label{ilrdchsd}
     The next proposition  shows the heavy--tail behavior in time
       of the inverse functional Fourier transform of the  spectral density operator family satisfying the
     above formulated conditions.
\begin{proposition}\label{prlrd}
Let $ \left\{\mathcal{F}_{\omega ,\theta }, \ (\omega ,\theta )\in
[-\pi,\pi]\times \Theta \right\}$  be the semiparametric family of
spectral density operators satisfying \textbf{Assumptions I--IV}. Consider
\begin{equation}\mathcal{R}_{t,\theta
}\underset{\mathcal{L}(\widetilde{H})}{=}
\int_{-\pi}^{\pi}\exp\left(i\omega t\right)\mathcal{F}_{\omega
,\theta }d\omega ,\quad t\in \mathbb{Z},\ \theta \in
\Theta.\label{sdo2b}\end{equation} \noindent    Then,
\begin{eqnarray}
&&\left\|\mathcal{R}_{t,\theta
}\left[\widetilde{\mathcal{M}}_{t,\mathcal{F},\mathcal{A}_{\theta
}}t^{\mathcal{A}_{\theta
}-I_{\widetilde{H}}}\right]^{-1}-I_{\widetilde{H}}\right\|_{\mathcal{L}(\widetilde{H})}
\to 0, \quad t\to \infty,\nonumber\\
\label{eqdefcovosd}
\end{eqnarray}
\noindent with, as before,  $I_{\widetilde{H}}$ denoting the identity operator
on $\widetilde{H}.$   Here, for each $\theta \in \Theta,$
  $\widetilde{\mathcal{M}}_{t,\mathcal{F},\mathcal{A}_{\theta
}}$
   admits the representation
\begin{eqnarray}
\widetilde{\mathcal{M}}_{t,\mathcal{F},\mathcal{A}_{\theta
}}&\underset{\mathcal{L}(\widetilde{H})}{=}&\int_{\Lambda }2\Gamma (1-\alpha (\lambda ,\theta ) )\sin\left((\pi /2)\alpha (\lambda ,\theta ) \right)M_{1/t,\mathcal{F}}(\lambda )
dE_{\lambda }\nonumber\\
&= &\int_{\Lambda }\widetilde{M}_{t,\mathcal{F},\mathcal{A}_{\theta
}}(\lambda )dE_{\lambda },\label{chsd2}
\end{eqnarray}
\noindent where symbols $\alpha (\lambda ,\theta )$ and $M_{\omega ,\mathcal{F}}(\lambda )$ satisfy \textbf{Assumptions III-IV}.  Thus,  $\{X_{t},\ t\in \mathbb{Z}\}$ displays LRD.
\end{proposition}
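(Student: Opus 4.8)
The plan is to transfer the problem from operators to their scalar symbols with respect to the common spectral family $\{E_{\lambda},\ \lambda\in\Lambda\}$ supplied by \textbf{Assumption III}, and then to invoke a classical Tauberian/Abelian asymptotic, applied uniformly in the spectral parameter. First I would exploit the commutativity noted in Remark \ref{remcofso}: since $\mathcal{M}_{\omega,\mathcal{F}}$ and $|\omega|^{-\mathcal{A}_{\theta}}$ are both diagonalized by $\{E_{\lambda}\}$, the asymptotic relations (\ref{sosd})--(\ref{sfasdo}) let me write, for $\omega$ in a neighbourhood of the origin,
\[
\mathcal{F}_{\omega,\theta}\underset{\mathcal{L}(\widetilde{H})}{=}\left(I_{\widetilde{H}}+\mathcal{E}_{\omega,\theta}\right)\int_{\Lambda}M_{\omega,\mathcal{F}}(\lambda)|\omega|^{-\alpha(\lambda,\theta)}\,dE_{\lambda},\qquad \|\mathcal{E}_{\omega,\theta}\|_{\mathcal{L}(\widetilde{H})}\to 0,\ \omega\to 0.
\]
Substituting into the definition (\ref{sdo2b}) of $\mathcal{R}_{t,\theta}$ and evaluating the quadratic form against $\psi\in\widetilde{H}$, the leading contribution becomes $\int_{\Lambda}R_{t,\theta}(\lambda)\,d\langle E_{\lambda}(\psi),\psi\rangle_{\widetilde{H}}$, with scalar symbol
\[
R_{t,\theta}(\lambda):=\int_{-\pi}^{\pi}e^{i\omega t}\,M_{\omega,\mathcal{F}}(\lambda)\,|\omega|^{-\alpha(\lambda,\theta)}\,d\omega .
\]
Here I would use Fubini, justified by \textbf{Assumption IV} together with Remark \ref{re000}, to interchange the frequency integral and the spectral integral.

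Second, I would carry out the scalar asymptotic analysis of $R_{t,\theta}(\lambda)$ for fixed $\lambda$. Splitting $\int_{-\pi}^{\pi}=\int_{|\omega|\le\delta}+\int_{\delta<|\omega|\le\pi}$, the outer piece contributes $O(t^{-1})$ by integration by parts, which is negligible against $t^{\alpha(\lambda,\theta)-1}$ because $\alpha(\lambda,\theta)<1$ by (\ref{disa}); the inner piece is governed by the classical identity $\int_{0}^{\infty}x^{-\alpha}\cos x\,dx=\Gamma(1-\alpha)\sin((\pi/2)\alpha)$ for $0<\alpha<1$, combined with the slow variation of $M_{\cdot,\mathcal{F}}(\lambda)$ at zero from \textbf{Assumption IV}(ii). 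This yields
\[
R_{t,\theta}(\lambda)=2\Gamma(1-\alpha(\lambda,\theta))\sin\!\left(\tfrac{\pi}{2}\alpha(\lambda,\theta)\right)M_{1/t,\mathcal{F}}(\lambda)\,t^{\alpha(\lambda,\theta)-1}(1+o(1))=\widetilde{M}_{t,\mathcal{F},\mathcal{A}_{\theta}}(\lambda)\,t^{\alpha(\lambda,\theta)-1}(1+o(1)),
\]
which is precisely the symbol of the normalising operator $\widetilde{\mathcal{M}}_{t,\mathcal{F},\mathcal{A}_{\theta}}t^{\mathcal{A}_{\theta}-I_{\widetilde{H}}}$ read off from (\ref{chsd2}).

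Finally, to upgrade this to the operator-norm statement (\ref{eqdefcovosd}), I would use that an operator diagonalized by $\{E_{\lambda}\}$ with symbol $g(\lambda)$ has $\mathcal{L}(\widetilde{H})$-norm equal to the essential supremum of $|g(\lambda)|$ with respect to the spectral measure. The diagonal main part of $\mathcal{R}_{t,\theta}[\widetilde{\mathcal{M}}_{t,\mathcal{F},\mathcal{A}_{\theta}}t^{\mathcal{A}_{\theta}-I_{\widetilde{H}}}]^{-1}-I_{\widetilde{H}}$ then carries the symbol $R_{t,\theta}(\lambda)/(\widetilde{M}_{t,\mathcal{F},\mathcal{A}_{\theta}}(\lambda)t^{\alpha(\lambda,\theta)-1})-1$, and the claim reduces to showing that its supremum over $\lambda$ tends to zero. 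I expect the main obstacle to be exactly this uniformity: the slow-variation hypothesis in \textbf{Assumption IV}(ii) is stated pointwise in $\lambda_{0}$, so I must upgrade it to a uniform Abelian estimate, and this is where the two-sided bounds $0<l_{\alpha}(\theta)\le\alpha(\lambda,\theta)\le L_{\alpha}(\theta)<1$ and $m\le M_{\omega,\mathcal{F}}(\lambda)\le M$ in (\ref{disa})--(\ref{dbnpp}) are decisive, keeping the Gamma, sine, exponent and slowly varying factors away from their degenerate values so that uniform Potter-type bounds make the $o(1)$ remainder uniform in $\lambda$. A second, related point is that the perturbation $\mathcal{E}_{\omega,\theta}$ need not commute with $\{E_{\lambda}\}$; I would control its contribution in operator norm directly, showing that $\|\mathcal{R}_{t,\theta}-\mathcal{R}_{t,\theta}^{\mathrm{main}}\|_{\mathcal{L}(\widetilde{H})}$ is $o(t^{\,l_{\alpha}(\theta)-1})$, which dominates $\|[\widetilde{\mathcal{M}}_{t,\mathcal{F},\mathcal{A}_{\theta}}t^{\mathcal{A}_{\theta}-I_{\widetilde{H}}}]^{-1}\|$ after normalisation, using the vanishing of $\|\mathcal{E}_{\omega,\theta}\|$ near zero and the oscillation of $e^{i\omega t}$. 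Once both are in place, (\ref{eqdefcovosd}) follows, and the resulting $t^{\mathcal{A}_{\theta}-I_{\widetilde{H}}}$ power-law decay, with spectrum of $\mathcal{A}_{\theta}$ contained in $(0,1)$, yields non-summability in time of the nuclear norms $\|\mathcal{R}_{t,\theta}\|_{L^{1}(\widetilde{H})}$, that is, LRD.
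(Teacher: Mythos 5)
Your proposal is correct and follows the same overall route as the paper: diagonalize everything through the common spectral family $\{E_{\lambda}\}$ of \textbf{Assumption III}, establish the Abelian asymptotic for the scalar symbol $R_{t,\theta}(\lambda)=\int_{-\pi}^{\pi}e^{i\omega t}M_{\omega,\mathcal{F}}(\lambda)|\omega|^{-\alpha(\lambda,\theta)}d\omega$, and lift the result back to $\mathcal{L}(\widetilde{H})$. The differences are in how the two key steps are discharged. For the scalar asymptotic the paper simply cites Theorem 6.5 of Beran (2017), whereas you rederive it from the splitting $\int_{|\omega|\le\delta}+\int_{\delta<|\omega|\le\pi}$ and the identity $\int_{0}^{\infty}x^{-\alpha}\cos x\,dx=\Gamma(1-\alpha)\sin((\pi/2)\alpha)$; this is more self-contained, at the cost of implicitly needing enough regularity of $\omega\mapsto M_{\omega,\mathcal{F}}(\lambda)$ away from the origin to make the integration-by-parts bound $O(t^{-1})$ legitimate (the same hypothesis is hidden in the citation the paper uses). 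For the lift to operator norm, the paper asserts uniform boundedness in $\lambda$ of the normalized ratios and invokes a bounded convergence theorem for the operator-valued measure; you correctly observe that, since the $\mathcal{L}(\widetilde{H})$-norm of a diagonal operator is the essential supremum of its symbol, pointwise convergence plus boundedness is not enough and one genuinely needs uniformity in $\lambda$ of the $o(1)$ remainder, which you propose to obtain from the two-sided bounds (\ref{disa})--(\ref{dbnpp}) via Potter-type estimates. You also isolate the perturbation $\mathcal{E}_{\omega,\theta}$ coming from \textbf{Assumption II}, which need not commute with $\{E_{\lambda}\}$ and which the paper treats as if $\mathcal{F}_{\omega,\theta}$ were exactly diagonal; controlling $\|\mathcal{R}_{t,\theta}-\mathcal{R}^{\mathrm{main}}_{t,\theta}\|_{\mathcal{L}(\widetilde{H})}=o(t^{\,l_{\alpha}(\theta)-1})$ separately is a genuine strengthening of the written argument. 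In short, your plan reaches the same conclusion by the same decomposition but makes explicit two uniformity issues that the paper's proof passes over.
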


\begin{proof}
For each $\lambda \in \Lambda,$ under  \textbf{Assumption  IV},     from Theorem 6.5 in \cite{Beran17},  as $t\to \infty,$
\begin{equation}
\left|\left[\int_{-\pi}^{\pi}\exp(it\omega )\frac{M_{\omega ,\mathcal{F}}(\lambda )}{|\omega
|^{\alpha (\lambda ,\theta )}}d\omega \right]\left[\widetilde{M}_{t,\mathcal{F},\mathcal{A}_{\theta
}}(\lambda )t^{\alpha (\lambda ,\theta )-1}\right]^{-1}-1\right|
 \to 0 .\label{lsdosymb}
\end{equation}

Under \textbf{Assumption IV}, for each $\theta \in \Theta ,$ from  equation   (\ref{chsd2}),  the sequence $$\left\{\left|\left[\int_{-\pi}^{\pi}\exp(in\omega )\frac{M_{\omega ,\mathcal{F}}(\lambda )}{|\omega
|^{\alpha (\lambda ,\theta )}}d\omega \right]\left[\widetilde{M}_{n,\mathcal{F},\mathcal{A}_{\theta
}}(\lambda )t^{\alpha (\lambda ,\theta )-1}\right]^{-1}-1\right|,\ n\in \mathbb{N}\right\}$$
\noindent is uniformly bounded in $\lambda \in \Lambda .$ Thus, we can apply
Bounded Convergence Theorem to obtain, from the pointwise convergence (\ref{lsdosymb}),
\begin{eqnarray}&&
\lim_{t\to \infty}\int_{\Lambda }\left|\left[\int_{-\pi}^{\pi}\exp(it\omega )\frac{M_{\omega ,\mathcal{F}}(\lambda )}{|\omega
|^{\alpha (\lambda ,\theta )}}d\omega \right]\left[\widetilde{M}_{t,\mathcal{F},\mathcal{A}_{\theta
}}(\lambda )t^{\alpha (\lambda ,\theta )-1}\right]^{-1}-1\right|dE_{\lambda }\nonumber\\
&&\hspace*{-1cm}\underset{\mathcal{L}(\widetilde{H})}{=}\int_{\Lambda }\lim_{t\to \infty}\left|\left[\int_{-\pi}^{\pi}\exp(it\omega )\frac{M_{\omega ,\mathcal{F}}(\lambda )}{|\omega
|^{\alpha (\lambda ,\theta )}}d\omega \right]\left[\widetilde{M}_{t,\mathcal{F},\mathcal{A}_{\theta
}}(\lambda )t^{\alpha (\lambda ,\theta )-1}\right]^{-1}-1\right|dE_{\lambda }=0.\nonumber\\
\label{srco3}
\end{eqnarray}
From Remark \ref{rem1b}, under \textbf{Assumption I}, $\mathcal{R}_{t}$ is bounded, for every $t\in \mathbb{Z}.$ From  (\ref{srco3}), under \textbf{Assumptions  II-III},  keeping in mind (\ref{sdo2}), we obtain

\begin{eqnarray}
&&\lim_{t\to \infty}\int_{\Lambda }\widetilde{M}_{t,\mathcal{F},\mathcal{A}_{\theta
}}(\lambda )t^{\alpha (\lambda ,\theta )-1}dE_{\lambda }\nonumber\\&&=\lim_{t\to \infty}
\int_{\Lambda }\left[\int_{-\pi}^{\pi}\exp(it\omega )\frac{M_{\omega ,\mathcal{F}}(\lambda )}{|\omega
|^{\alpha (\lambda ,\theta )}}d\omega \right]dE_{\lambda }\nonumber\\
&&=\lim_{t\to \infty}\int_{-\pi}^{\pi}\exp(it\omega )\mathcal{F}_{\omega }d\omega
\nonumber\\
&&=\lim_{t\to \infty}\mathcal{R}_{t,\theta },
\label{ldct}
\end{eqnarray}
\noindent   in the bounded operator norm.

Thus,    equation (\ref{eqdefcovosd})  holds. Therefore, for $M>0,$ sufficiently large,
\begin{eqnarray}&&\hspace*{-1cm}\sum_{t\in \mathbb{Z}}\left\|
\mathcal{R}_{t,\theta} \right\|_{L^{1}(H)}
\geq \sum_{|t|>M}\left\|
\mathcal{R}_{t,\theta}
\right\|_{\mathcal{L}(H)}
\nonumber\\
&& \geq \sum_{|t|>M}\left\|\int_{\Lambda }\widetilde{M}_{t,\mathcal{F},\mathcal{A}_{\theta
}}(\lambda )dE_{\lambda }\right\|_{\mathcal{L}(H)} |t|^{l_{\alpha } (\theta )-1}\nless \infty,
\label{LRDfs}
\end{eqnarray}
\noindent as we wanted to prove.
\end{proof}

\subsection{Examples}\label{ccl}
Some special cases of the LRD family of functional sequences introduced in the spectral domain under \textbf{Assumptions I--IV}
are now analyzed.

\medskip

\subsection{Example 1. Multifractionally integrated functional autoregressive moving averages processes}
\label{SFIFLTS} We consider here, in the stationary case, an extended
family  (see
Remark 9 in \cite{LiRobinsonShang19})  of fractionally integrated functional autoregressive moving
averages models of variable order.

  Let $B$ be a difference operator such that
\begin{equation}
E\|B^{j}X_{t}-X_{t-j}\|_{H}^{2}=0,\quad \forall t,j\in \mathbb{Z}.
\label{ihnorndo}
\end{equation}

Consider the state equation
\begin{eqnarray}&&(1-B)^{\mathcal{A}_{\theta }/2}\Phi_{p}(B)
X_{t}\underset{\mathcal{L}^{2}_{H}(\Omega , \mathcal{A}, P)}{=} \Psi_{q}(B)\eta_{t},\quad \forall t\in \mathbb{Z},\label{farima}
\end{eqnarray}
\noindent where equality holds in the norm of the space $\mathcal{L}^{2}_{H}(\Omega , \mathcal{A}, P).$
Here, $\{\eta_{t},\ t\in \mathbb{Z}\}$ is a
sequence of independent and identically distributed random curves
such that $E[\eta_{t}]=0,$ and $E[\eta_{t}\otimes
\eta_{s}]=\delta_{t,s}\mathcal{R}^{\eta}_{0},$ with
$\mathcal{R}^{\eta}_{0}\in L^{1}(H),$ and $\delta_{t,s}=0,$
for $t\neq s,$ and $\delta_{t,s}=1,$ for $t=s.$  In particular,
\begin{equation}\left\|\mathcal{R}^{\eta}_{0}(h)-\sum_{l\geq 1}^{\infty}
\lambda_{l}(\mathcal{R}^{\eta}_{0})\left\langle \phi_{l},h\right\rangle_{H} \phi_{l}\right\|_{H}=0,\quad
\forall h\in H,\label{rkhs}
\end{equation}
\noindent where $\left\{\phi_{n},\ n\geq 1\right\}$ is an orthonormal  basis of eigenvectors    in $H,$ associated with the eigenvalues
$\{\lambda_{n}(\mathcal{R}^{\eta}_{0}),\ n\geq 1\}.$  Here,
\begin{eqnarray}
\Phi_{p}(B)=1-\sum_{j=1}^{p}\varphi_{j}B^{j},\quad \Psi_{q}(B)=\sum_{j=1}^{q}\psi_{j}B^{j},\nonumber\\
\label{rf}
\end{eqnarray}
\noindent where operators $\varphi_{j},$ $j=1,\dots,p,$ and $\psi_{j},$ $j=1,\dots,q,$ are assumed to be positive self-adjoint bounded operators on
$H,$  admitting the following diagonal spectral decompositions:
\begin{eqnarray}
\varphi_{j}&=&\sum_{l\geq 1}\lambda_{l}(\varphi_{j})\phi_{l}\otimes \phi_{l},\quad j=1,\dots,p\nonumber\\
\psi_{j}&=&\sum_{l\geq 1}\lambda_{l}(\psi_{j})\phi_{l}\otimes \phi_{l},\quad j=1,\dots,q.
\label{de}
\end{eqnarray}
\noindent Also, for each $l\geq 1,$  $\Phi_{p,l}(z)=1-\sum_{j=1}^{p}\lambda_{l}(\varphi_{j})z^{j}$ and $\Psi_{q,l}=\sum_{j=1}^{q}\lambda_{l}(\psi_{j})z^{j}$ have not common
roots, and  their roots are outside of the unit
circle (see also Corollary 6.17 in \cite{Beran17}).

We also assume that, for each $\theta \in \Theta ,$ operator
$\mathcal{A}_{\theta }$ admits the diagonal spectral representation:
\begin{equation}
\mathcal{A}_{\theta }=\sum_{l\geq 1}\alpha (l,\theta
)\phi_{l}\otimes \phi_{l},
\label{lmor}
\end {equation}
\noindent and
\begin{equation}
\int_{-\pi}^{\pi}\sup_{l\geq 1}\frac{\lambda_{l}(\mathcal{R}^{\eta}_{0})}{2\pi}\left|\frac{\Psi_{q,l}(\exp(-i\omega ))}{\Phi_{p,l}(\exp(-i\omega ))}\right|^{2}\left|1-\exp(-i\omega )\right|^{-\alpha (l,\theta
)}d\omega <\infty.\label{a1}
\end{equation}

\noindent Thus, \textbf{Assumption I} holds. Note that, for each $l\geq 1$  and $\theta \in \Theta ,$
\begin{eqnarray}&&(1-\exp(-i\omega  ))^{-\alpha (l,\theta )/2}=\sum_{j=0}^{\infty}a_{j}(l)\exp(-ij\omega )
\nonumber\\
&& a_{j}(l)=\frac{\Gamma (j+\alpha (l,\theta)/2)}{\Gamma (j+1)\Gamma (\alpha (l,\theta)/2)},\quad  j\geq 0.\nonumber\end{eqnarray}
\noindent
Assume that, for each $l\geq 1$ and $\theta \in \Theta ,$
\begin{eqnarray}
&&\sum_{j=0}^{\infty}b_{j,\theta }(l)z^{j}=
 (1-\exp(-iz ))^{-\alpha (l,\theta )/2}\frac{\Psi_{q,l}(z)}{\Phi_{p,l}(z)},\ z\in \mathbb{C}.\label{ifp}\end{eqnarray}

From equations (\ref{farima})--(\ref{ifp}), applying
Corollary 6.17  in \cite{Beran17},
\begin{equation}
X_{t}(\phi_{l})\underset{\mathcal{L}^{2}(\Omega,\mathcal{A},P)}{=}\left(\sum_{j=0}^{\infty}b_{j}(l)B^{j}\right)\eta_{t}(\phi_{l}),\quad l\geq 1,\end{equation}
\noindent and from  Corollary 6.18  in \cite{Beran17}, for each $l\geq 1$  and $\theta \in \Theta ,$  there exists  $\widehat{f}(\omega ,l ,\theta )$ such that
\begin{eqnarray}
\widehat{f}(\omega , l ,\theta )&=& \frac{\lambda_{l}(\mathcal{R}^{\eta}_{0})}{2\pi}\left|\sum_{j=0}^{\infty}b_{j}(l)\exp(-ij\omega )\right|^{2}
\nonumber\\
&=&\frac{\lambda_{l}(\mathcal{R}^{\eta}_{0})}{2\pi}\left|\frac{\Psi_{q,l}(\exp(-i\omega ))}{\Phi_{p,l}(\exp(-i\omega ))}\right|^{2}\left|1-\exp\left(-i\omega \right)\right|^{-\alpha (l,\theta )}.
\label{ifp2}\end{eqnarray}

Thus, for each $l\geq 1$  and $\theta \in \Theta ,$
\begin{equation}
\left\langle \mathcal{R}_{t, \theta }(\phi_{l}),\phi_{l}\right\rangle_{\widetilde{H}}=\int_{-\pi}^{\pi}\exp\left(i\omega t\right)\widehat{f}(\omega , l ,\theta )d\omega, \label{iftcop}
\end{equation}
\noindent and under \textbf{Assumption I} (see equation (\ref{a1})),   we obtain, for each $\theta \in \Theta ,$
\begin{eqnarray}&&\mathcal{R}_{t,\theta }\underset{\mathcal{L}(H)}{=}\int_{-\pi}^{\pi}\exp\left(i\omega t\right)\left[\sum_{l\geq 1}\frac{\lambda_{l}(\mathcal{R}^{\eta}_{0})}{2\pi}\left|\frac{\Psi_{q,l}(\exp(-i\omega ))}{\Phi_{p,l}(\exp(-i\omega ))}\right|^{2}
\right.\nonumber\\
&&\hspace*{2cm}\left.\times \left|1-\exp\left(-i\omega \right)\right|^{-\alpha (l,\theta )}\phi_{l}\otimes \phi_{l}\right]d\omega ,\nonumber
\end{eqnarray}
\noindent under the condition (see equation  (\ref{eqfcpr12}) in Lemma \ref{lempo}),
$$\sigma_{X,\theta}^{2}=\sum_{l\geq 1}\left|\int_{-\pi}^{\pi}\frac{\lambda_{l}(\mathcal{R}^{\eta}_{0})}{2\pi}\left|\frac{\Psi_{q,l}(\exp(-i\omega ))}{\Phi_{p,l}(\exp(-i\omega ))}\right|^{2}\left|1-\exp\left(-i\omega \right)\right|^{-\alpha (l,\theta )}d\omega\right| <\infty .$$

Note that, in this example,   our operator measure satisfies
$$dE(l)(\psi)(\varphi)=d\left\langle E_{\lambda }(\psi),
\varphi\right\rangle =
\phi_{l}(\psi)\phi_{l}(\varphi),\quad   \forall \psi, \varphi,\   l\geq 1.$$
\noindent  Thus, we are considering  a discrete (or point) operator measure, defined from
the common system of eigenvectors $\{\phi_{l},\ l\geq 1\}.$  Equivalently, the spectral family
$\{E_{\lambda_{l}}, \ l\geq 1\}$ admits a representation in terms of a spectral kernel $\widetilde{\Phi},$  defined from the  eigenvectors
$\{\phi_{l},\ l\geq 1\},$  and a point spectral measure (see, e.g., Section 8.2.1 in  \cite{Ramm05}):

\begin{equation}
E_{\lambda_{l}}= \sum_{k=1}^{l}\phi_{k}\otimes \phi_{k}=\sum_{k=1}^{l}\widetilde{\Phi}_{k} ,\quad l\geq 1.\label{eqproy}
\end{equation}

\medskip

    Note that,  since  $\sin(\omega )\sim \omega,$
$\omega \to 0,$
\begin{equation}
\left|1-\exp\left(-i\omega \right)\right|^{-\mathcal{A}_{\theta
}}=[4\sin^{2}(\omega /2)]^{-\mathcal{A}_{\theta }/2}\sim |\omega
|^{-\mathcal{A}_{\theta }},\quad \omega \to 0,\label{ffsvint}
\end{equation}
\noindent where the frequency varying operator
  $\left|1-\exp\left(-i\omega
\right)\right|^{-\mathcal{A}_{\theta }/2}$ is interpreted
as  in
\cite{Charac14}; \cite{duker18};  \cite{Rackauskas};
 \cite{Rackauskasv2}.

Keeping in mind
(\ref{ffsvint}), the following identifications are obtained in
relation to (\ref{sosd})--(\ref{sfasdo}),
\begin{eqnarray}&&
M_{\omega ,\mathcal{F}}(\lambda )= M_{\omega,\mathcal{F}
}(l)=\frac{\lambda_{l}(\mathcal{R}^{\eta}_{0})}{2\pi}
\left|\frac{\Psi_{q,l}\left(\exp(-i\omega
)\right)}{\Phi_{p,l}\left(\exp(-i\omega )\right)}\right|^{2},\quad
l\geq 1,\ \omega \in [-\pi,\pi],
\nonumber\\
&&\left|1-\exp(-i\omega )\right|^{-\alpha (l,\theta )}\sim |\omega
|^{-\alpha (l,\theta )},\quad \omega \to 0,\quad l\geq
1.\label{lrdzf}
\end{eqnarray}
 \noindent Hence, as
$\omega \to 0,$ for each $l\geq 1,$ \begin{equation}\widehat{f}(\omega
,l,\theta )\sim \mathcal{K}_{l} |\omega |^{-\alpha (l,\theta
)},\quad \mathcal{K}_{l}=\frac{\lambda_{l}(\mathcal{R}^{\eta}_{0})}{2\pi}\left|\frac{\Psi_{q,l}(1)}{\Phi_{p,l}(1)}\right|^{2},\quad
\mathcal{K}=\sup_{l\geq 1}\mathcal{K}_{l}<\infty.\label{eqbzf}
\end{equation}

\noindent Assume that  $\Psi_{q,l}$ and $\Phi_{p,l},$ $l\geq 1,$ are such that $$M_{\omega,\mathcal{F}
}(l)=\frac{\lambda_{l}(\mathcal{R}^{\eta}_{0})}{2\pi}
\left|\frac{\Psi_{q,l}\left(\exp(-i\omega
)\right)}{\Phi_{p,l}\left(\exp(-i\omega )\right)}\right|^{2}$$ \noindent  satisfies the conditions  given in \textbf{Assumption IV(ii)}.
Hence, from Proposition \ref{prlrd},  the extended class of fractionally integrated functional
autoregressive moving averages models analyzed here displays LRD (see  also
 Remark 9 in \cite{LiRobinsonShang19}).  Indeed,  $\mathcal{A}_{\theta }/2$ defines the multifractional order of integration.

\subsection{Example 2. Discrete sampling of multifractional  $H$--valued processes in continuous time}
 \label{CSCS}
  Let $H=L^{2}(\mathbb{R},\mathbb{R}),$ and
 $\widetilde{H}=L^{2}(\mathbb{R},\mathbb{C}).$
  Consider  \begin{eqnarray} &&
 d\left\langle E_{\lambda
}(\varphi),\psi\right\rangle_{\widetilde{H}}=\int_{\mathbb{R}}
\widehat{\varphi }(\lambda )\widehat{\psi }(\lambda
)d\lambda\label{sfri}\\
&& \widehat{\psi}(\lambda
)=\int_{\mathbb{R}}\exp\left(-i\left\langle \lambda ,z\right\rangle
\right)\psi(z)dz,\quad \psi \in L^{1}(\mathbb{R})\nonumber\\
&& \widehat{\varphi }(\lambda
)=\int_{\mathbb{R}}\exp\left(-i\left\langle \lambda ,z\right\rangle
\right)\varphi(z)dz,\quad \varphi \in L^{1}(\mathbb{R})\label{sfri22}.
\end{eqnarray}
With  this particular choice,  for $(\lambda ,  \omega)\in
 \mathbb{R}^{2},$ assume that the symbol $f(\omega ,\lambda ,\theta )$ of the spectral density operator $\mathcal{F}_{\omega },$
 with respect to the spectral family $\{E_{\lambda
} ,\ \lambda \in \mathbb{R} \}$ introduced in  (\ref{sfri})--(\ref{sfri22}) is defined as follows:

\begin{eqnarray} &&
f(\omega ,\lambda ,\theta )= |\omega |^{-\alpha (\lambda ,\theta)}
 N_{\omega }(\lambda
) h(\omega ),\label{fexample}\end{eqnarray}
\noindent where $\alpha (\lambda, \theta )$ satisfies \textbf{Assumption IV(i)}, and   $h$ is a positive even taper function of
bounded variation, with  bounded support is the interval $[-\pi,\pi],$
with $h(-\pi)=h(\pi)=0$ (see, e.g.,
\cite{Guyon95}). We also assume that $h$ is
Lipschitz--continuous function, and  $N_{\omega }$   is such that $M_{\omega ,\mathcal{F}}(\lambda )=N_{\omega }(\lambda
) h(\omega )$ satisfies \textbf{Assumption IV(ii)}. Furthermore, for $\omega \in [-\pi,\pi]\backslash \{0\},$
\begin{equation}
\sup_{\lambda \in \mathbb{R}}|f(\omega ,\lambda ,\theta )|<\infty.
 \label{bonfc}
 \end{equation}

 As special case of  (\ref{fexample}), we can consider the tapered  continuous  version of  Example 1 in Section \ref{SFIFLTS}
   $$f(\omega ,\lambda
, \theta)= |\omega |^{-\alpha (\lambda ,\theta)} \frac{P(\lambda
,\omega)}{Q(\lambda ,\omega)}h(\omega)1_{[-\pi,\pi]}(\omega
),\quad  (\lambda ,\omega )\in \mathbb{R}^{2},$$ \noindent
 where the  taper function satisfies the above required conditions, and
  $P$
and $Q$ are     positive  polynomials such that   \textbf{Assumption IV}(ii) holds.  Particularly, when discrete sampling  of  the solution to fractional and multifractional pseudodifferential  evolution equations with Gaussian functional  innovations is considered, one  can implement inference tools from this  framework (see, e.g., \cite{AnhLeonenkoa};  \cite{AnhLeonenkob};  \cite{Kelbert05}).

\section{The convergence    to zero  in $\mathcal{S}(\widetilde{H})$  norm of the bias of the integrated periodogram
operator } \label{sec2}

Theorem \ref{pr1} provides the convergence to zero, in the
Hilbert--Schmidt operator norm,  of the integrated bias of the
periodogram operator.
 Note that, in \cite{Cerovecki17}, weak--convergence of the
covariance operator of the fDFT to the spectral density operator,
and the  convergence of their respective traces is proved. The next
result provides convergence  in $\mathcal{S}(\widetilde{H})$ norm  of the
integrated  covariance operator of the fDFT
to the integrated spectral density operator, in the frequency domain, beyond the SRD condition assumed
in \cite{Cerovecki17}.

\begin{theorem}
\label{pr1} Under \textbf{Assumption I},   the following limit
holds:
$$\left\|\int_{-\pi}^{\pi}\left[\mathcal{F}_{\omega}
-\mathcal{F}_{\omega
}^{(T)}\right]d\omega\right\|_{\mathcal{S}(\widetilde{H})}\to 0,\quad  T\to \infty.$$

\end{theorem}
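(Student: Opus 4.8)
The plan is to bound the Hilbert–Schmidt norm of the integrated bias by exploiting the convolution representation (\ref{eqconv}) of $\mathcal{F}_{\omega}^{(T)}$ as the Fej\'er-kernel smoothing of $\mathcal{F}_{\bullet}$, and then pass to coordinates in a fixed orthonormal basis $\{\psi_k,\ k\ge1\}$ of $\widetilde{H}$ so that the whole problem reduces to a scalar Fej\'er-kernel approximation argument applied simultaneously to all matrix entries. **First I would** write the integrated bias operator entrywise: for each pair $k,l$,
\begin{equation}
\left\langle \int_{-\pi}^{\pi}\left[\mathcal{F}_{\omega}-\mathcal{F}_{\omega}^{(T)}\right]d\omega\,(\psi_k),\psi_l\right\rangle_{\widetilde{H}}
=\int_{-\pi}^{\pi}\left[\mathcal{F}_{\omega}(\psi_k)(\psi_l)-\left(F_T*\mathcal{F}_{\bullet}\right)(\omega)(\psi_k)(\psi_l)\right]d\omega.
\nonumber
\end{equation}
Using (\ref{meanperiodogram}), the integral of the smoothed term collapses because $\int_{-\pi}^{\pi}F_T(\omega-\xi)\,d\omega$ has a clean Fourier-coefficient interpretation; equivalently, both the true and the smoothed integrated entries can be expressed through the covariance coefficients $\mathcal{R}_u(\psi_k)(\psi_l)$, with the smoothed version carrying the Fej\'er weights $(T-|u|)/T$ on $|u|\le T-1$ and zero beyond. **Then I would** subtract, so the difference of the integrated entries is governed by the tail sum of covariance coefficients together with the $|u|/T$ defect inside the window.

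The key estimate is then to show that $\sum_{k,l}$ of the squared entrywise differences tends to zero. **The crucial ingredient** is Lemma \ref{lempo}, which under \textbf{Assumption I} gives the square-summability
$\sum_{t\in\mathbb{Z}}\|\mathcal{R}_t\|_{\mathcal{S}(\widetilde{H})}^2=\sum_{t\in\mathbb{Z}}\sum_{k,l}|\mathcal{R}_t(\psi_k)(\psi_l)|^2<\infty$. This lets me control the aggregate over $(k,l)$ of the contributions from the tail $|u|\ge T$ and from the triangular-weight defect $|u|/T$ for $|u|<T$. Concretely, after squaring and summing over $k,l$, I expect a bound of the form
\begin{equation}
\left\|\int_{-\pi}^{\pi}\left[\mathcal{F}_{\omega}-\mathcal{F}_{\omega}^{(T)}\right]d\omega\right\|_{\mathcal{S}(\widetilde{H})}^{2}
\le \mathcal{K}\left[\sum_{|u|\ge T}\|\mathcal{R}_u\|_{\mathcal{S}(\widetilde{H})}^{2}+\frac{1}{T^2}\sum_{|u|<T}u^2\,\|\mathcal{R}_u\|_{\mathcal{S}(\widetilde{H})}^{2}\right],
\nonumber
\end{equation}
where the first term is the tail of a convergent series and hence vanishes, and the second is handled by a Ces\`aro/Kronecker-type argument: since $\sum_u \|\mathcal{R}_u\|_{\mathcal{S}(\widetilde{H})}^2<\infty$, the weighted averages $T^{-2}\sum_{|u|<T}u^2\|\mathcal{R}_u\|_{\mathcal{S}(\widetilde{H})}^2\to0$ as $T\to\infty$.

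**The main obstacle I anticipate** is justifying the interchange of the summation over $(k,l)$ with the frequency integration and the limit, i.e. controlling everything uniformly in the coordinate indices rather than frequency-by-frequency. Remark \ref{re000} guarantees $\mathcal{F}_\omega\in\mathcal{S}(\widetilde{H})$ a.e. and $\|\mathcal{F}_\omega\|_{\mathcal{S}(\widetilde{H})}\in L^2([-\pi,\pi])$, which is exactly what is needed to apply Parseval/Plancherel in the $\mathcal{S}(\widetilde{H})$-valued setting and legitimize the Fourier-inversion identities entrywise; the delicate point is that the integrated bias is an honest Hilbert–Schmidt operator (not merely bounded), so I must verify that the dominating series converges in $\mathcal{S}(\widetilde{H})$ and apply dominated convergence with the $\ell^2$-summable majorant furnished by Lemma \ref{lempo}. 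Once the two-term bound above is established, the conclusion $\to0$ as $T\to\infty$ follows directly, giving convergence in the Hilbert–Schmidt norm as claimed.
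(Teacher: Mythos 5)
Your route is genuinely different from the paper's and, once one step is made precise, it works. The paper expands $\bigl\|\int_{-\pi}^{\pi}[\mathcal{F}_{\omega}-\mathcal{F}_{\omega}^{(T)}]d\omega\bigr\|_{\mathcal{S}(\widetilde{H})}^{2}$ into the four cross terms $\mathcal{F}_{\xi}\mathcal{F}_{\omega}$, $\mathcal{F}_{\xi}\mathcal{F}_{\omega}^{(T)}$, $\mathcal{F}_{\xi}^{(T)}\mathcal{F}_{\omega}$, $\mathcal{F}_{\xi}^{(T)}\mathcal{F}_{\omega}^{(T)}$, proves pointwise convergence of the diagonal entries, dominates them by $2\|\mathcal{F}_{\omega}\|_{L^{1}(\widetilde{H})}$, and controls the $T$--dependent terms uniformly in $T$ via Young's convolution inequality before applying dominated convergence; you instead push everything to the autocovariance coefficients and invoke the square--summability $\sum_{t}\|\mathcal{R}_{t}\|_{\mathcal{S}(\widetilde{H})}^{2}<\infty$ of Lemma \ref{lempo}, which is shorter and avoids the cross--term bookkeeping. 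Two remarks. First, your displayed inequality compares the norm of an integral with a sum of squared coefficient norms, so you need the intermediate Cauchy--Schwarz step
$$\Bigl\|\int_{-\pi}^{\pi}\bigl[\mathcal{F}_{\omega}-\mathcal{F}_{\omega}^{(T)}\bigr]d\omega\Bigr\|_{\mathcal{S}(\widetilde{H})}^{2}\leq 2\pi\int_{-\pi}^{\pi}\bigl\|\mathcal{F}_{\omega}-\mathcal{F}_{\omega}^{(T)}\bigr\|_{\mathcal{S}(\widetilde{H})}^{2}d\omega,$$
after which Parseval, justified entrywise by (\ref{lhni2}), yields exactly your two--term bound $\sum_{|u|\geq T}\|\mathcal{R}_{u}\|_{\mathcal{S}(\widetilde{H})}^{2}+T^{-2}\sum_{|u|<T}u^{2}\|\mathcal{R}_{u}\|_{\mathcal{S}(\widetilde{H})}^{2}$, and dominated convergence for series (rather than Kronecker's lemma, which is not needed) finishes the argument. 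Second, and worth noticing: if you carry your own entrywise computation to the end, integration over the full period $[-\pi,\pi]$ annihilates every Fourier coefficient except $u=0$, where the Fej\'er weight $(T-|u|)/T$ equals $1$; hence $\int_{-\pi}^{\pi}\mathcal{F}_{\omega}d\omega=\mathcal{R}_{0}=\int_{-\pi}^{\pi}\mathcal{F}_{\omega}^{(T)}d\omega$, and the integrated bias operator is identically zero for every $T\geq 2$. So no ``tail plus defect'' terms actually survive for the quantity in the statement --- the only genuine content is the justification of termwise integration, which \textbf{Assumption I} and (\ref{lhni2}) supply --- and your estimate is a correct upper bound for something that vanishes exactly. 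The tail/defect analysis you describe becomes essential only for the weighted integrals $\int_{-\pi}^{\pi}[p_{\omega}^{(T)}-\mathcal{F}_{\omega,\theta_{0}}]\mathcal{W}_{\omega,\theta}d\omega$ treated later in Theorem \ref{theslrd}, where the weight prevents the collapse to $\mathcal{R}_{0}$.
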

\begin{proof}  Let $\{\psi_{k},\ k\geq 1\}$
be an orthonormal basis of $\widetilde{H}.$ Under  \textbf{Assumption I},
\begin{eqnarray}
&&\hspace*{-0.3cm} \left\|\int_{-\pi}^{\pi}\left[\mathcal{F}_{\omega }-\mathcal{F}_{\omega }^{(T)}\right]d\omega \right\|_{\mathcal{S}(\widetilde{H})}^{2}\nonumber\\
&&=\sum_{k\geq 1}\int_{-\pi}^{\pi}\int_{-\pi}^{\pi}
\left[\mathcal{F}_{\xi }\mathcal{F}_{\omega }-\mathcal{F}_{\xi }
\mathcal{F}_{\omega }^{(T)}-\mathcal{F}_{\xi }^{(T)}
\mathcal{F}_{\omega }+\mathcal{F}_{\xi }^{(T)} \mathcal{F}_{\omega
}^{(T)}\right](\psi_{k})(\psi_{k})
d\omega d\xi.\nonumber\\
\label{eqintlihnorm}
\end{eqnarray}

From Lemma \ref{lempo}  (see equation (\ref{eqfcpr12})),  for  every $k\geq 1,$
$\mathcal{F}_{\omega }(\psi_{k})(\psi_{k})\in L^{1}([-\pi,\pi]).$ Hence,  for each $k\geq 1,$
\begin{equation}
\mathcal{F}_{\omega }^{(T)}
(\psi_{k})(\psi_{k})\to \mathcal{F}_{\omega }(\psi_{k})(\psi_{k}),  \ T\to \infty,\  \omega \in [-\pi,\pi]\backslash \Lambda_{0}.\label{convfk}
\end{equation}

Applying triangle inequality, for every $T\geq 2,$
\begin{eqnarray}
&&
\left|
\left[\mathcal{F}_{\omega }-\mathcal{F}_{\omega }^{(T)}
\right](\psi_{k})(\psi_{k})\right|\leq 2\left\|\mathcal{F}_{\omega
}\right\|_{L^{1}(\widetilde{H})}<\infty,\quad \omega \in [-\pi,\pi]\backslash \Lambda_{0} ,\nonumber\\
\label{ycil1b}
\end{eqnarray}
\noindent  since  from  (\ref{eqfcpr12}), $$\int_{-\pi}^{\pi}\left\|\mathcal{F}_{\omega
}\right\|_{L^{1}(\widetilde{H})}d\omega<\infty.$$

 \noindent   Hence,   from equations (\ref{convfk})--(\ref{ycil1b}),  keeping in mind (\ref{eqfcpr12}), Dominated Convergence Theorem leads to

\begin{eqnarray}
&&\lim_{T\to \infty}\int_{-\pi}^{\pi}\left|
\left[\mathcal{F}_{\omega }-\mathcal{F}_{\omega }^{(T)}
\right](\psi_{k})(\psi_{k})\right|d\omega \nonumber\\
&&=\int_{-\pi}^{\pi}\lim_{T\to \infty}\left|
\left[\mathcal{F}_{\omega }-\mathcal{F}_{\omega }^{(T)}
\right](\psi_{k})(\psi_{k})\right|d\omega=0,\quad k\geq 1.\label{bct2}
\end{eqnarray}

Note also that, from  Lemma \ref{lempo}, $\|\mathcal{F}_{\omega
}\|_{\mathcal{S}(\widetilde{H})}\in L^{2}\left([-\pi,\pi],
\mathbb{C}\right).$  Hence,   for every $k,l \geq 1,$
  $\mathcal{F}_{\omega }(\psi_{k})(\psi_{l})\in
  L^{2}([-\pi,\pi], \mathbb{C}).$  Particularly, from Young's convolution inequality with $p=2,$ for each $k,l\geq 1,$
  \begin{eqnarray}&&
  \int_{-\pi}^{\pi}|\mathcal{F}_{\omega }^{(T)}(\psi_{k})(\psi_{l})|^{2}d\omega \leq   \int_{-\pi}^{\pi}|\mathcal{F}_{\omega }(\psi_{k})(\psi_{l})|^{2}d\omega .\nonumber\\
  \label{ycihsn}
  \end{eqnarray}
  \noindent Hence,  under \textbf{Assumption I},   applying    Cauchy--Schwarz  and     Jensen's inequalities,   and  (\ref{ycihsn}), from Lemma \ref{lempo}  (see equation (\ref{eqfcpr12})),
  we obtain
 \begin{eqnarray}&&\sum_{k\geq 1}\int_{-\pi}^{\pi}\int_{-\pi}^{\pi}\mathcal{F}_{\xi }^{(T)}\mathcal{F}_{\omega }^{(T)}(\psi_{k})(\psi_{k})d\omega d\xi
  \nonumber\\
  &&=\sum_{k\geq 1}\int_{-\pi}^{\pi}\int_{-\pi}^{\pi}\left\langle \mathcal{F}_{\omega }^{(T)}(\psi_{k}),\mathcal{F}_{\xi }^{(T)}(\psi_{k})
  \right\rangle_{\widetilde{H}}d\omega d\xi
  \nonumber\\
    && \leq \sum_{k\geq 1}\left[\int_{-\pi}^{\pi}\left\|\mathcal{F}_{\omega }^{(T)}(\psi_{k})\right\|_{\widetilde{H}}d\omega \right]
    \left[\int_{-\pi}^{\pi}\left\|\mathcal{F}_{\xi }^{(T)}(\psi_{k})\right\|_{\widetilde{H}}d\xi\right]
  \nonumber\\
  &&\leq 4\pi^{2}\sum_{k\geq 1}\sqrt{\left[\int_{-\pi}^{\pi}\mathcal{F}_{\omega }^{(T)}\mathcal{F}_{\omega}^{(T)}(\psi_{k})(\psi_{k})d\omega \right]}
 \sqrt{\left[\int_{-\pi}^{\pi} \mathcal{F}_{\xi }^{(T)}\mathcal{F}_{\xi }^{(T)}(\psi_{k})(\psi_{k})
     d\xi      \right]      }   \nonumber\end{eqnarray}\begin{eqnarray}
  &&            \leq 4\pi^{2}\sqrt{\sum_{k\geq 1} \int_{-\pi}^{\pi}\left\|\mathcal{F}_{\omega }^{(T)}(\psi_{k})\right\|_{\widetilde{H}}^{2}d\omega} \sqrt{\sum_{k\geq 1}   \int_{-\pi}^{\pi}\left\|\mathcal{F}_{\xi }^{(T)}(\psi_{k})\right\|_{\widetilde{H}}^{2}d\xi }\nonumber\\
    &&   =4\pi^{2}\sqrt{\sum_{k,l\geq 1}\int_{-\pi}^{\pi}|\mathcal{F}_{\omega }^{(T)}(\psi_{k})(\psi_{l})|^{2}d\omega}\sqrt{\sum_{k,l\geq 1}\int_{-\pi}^{\pi}|\mathcal{F}_{\xi }^{(T)}(\psi_{k})(\psi_{l})|^{2}d\xi}\nonumber\\
  &&            \leq 4\pi^{2}\sqrt{\sum_{k,l\geq 1}\int_{-\pi}^{\pi}|\mathcal{F}_{\omega }(\psi_{k})(\psi_{l})|^{2}d\omega}\sqrt{\sum_{k,l\geq 1}\int_{-\pi}^{\pi}|\mathcal{F}_{\xi }(\psi_{k})(\psi_{l})|^{2}d\xi}\nonumber\\
   &&  =4\pi^{2}\int_{-\pi}^{\pi}\left\|\mathcal{F}_{\omega}\right\|^{2} _{\mathcal{S}(\widetilde{H})}d\omega <\infty .
    \label{feprii}
 \end{eqnarray}

Following similar
steps to (\ref{feprii}),
\begin{eqnarray}
&&\sum_{k\geq 1}\int_{-\pi}^{\pi}\int_{-\pi}^{\pi}\mathcal{F}_{\xi
}^{(T)}\mathcal{F}_{\omega }(\psi_{k})(\psi_{k})d\omega  d\xi
\nonumber\\
        &&\leq 4\pi^{2} \sum_{k\geq 1}\sqrt{\int_{-\pi}^{\pi}\int_{-\pi}^{\pi}\mathcal{F}_{\xi}\mathcal{F}_{\xi}(\psi_{k})(\psi_{k})
  \mathcal{F}_{\omega }\mathcal{F}_{\omega }(\psi_{k})(\psi_{k})
    d\omega d\xi}\nonumber\\  &&\leq 4\pi^{2}\int_{-\pi}^{\pi}\left\|\mathcal{F}_{\omega}\right\|^{2} _{\mathcal{S}(\widetilde{H})}d\omega <\infty,
 \label{fepriiv2}
 \end{eqnarray}
\noindent as well as
 \begin{eqnarray}
&&\sum_{k\geq 1}\int_{-\pi}^{\pi}\int_{-\pi}^{\pi}\mathcal{F}_{\xi
}\mathcal{F}_{\omega }^{(T)}(\psi_{k})(\psi_{k})d\omega  d\xi
\nonumber\\
 &&\leq 4\pi^{2}\int_{-\pi}^{\pi}\left\|\mathcal{F}_{\omega}\right\|^{2} _{\mathcal{S}(\widetilde{H})}d\omega <\infty.
 \label{fepriiv3}
 \end{eqnarray}

Furthermore,    from  Lemma \ref{lempo}  (see equation (\ref{eqfcpr12})),
 $\mathcal{F}_{\omega }(\psi_{k})(\psi_{k})\in L^{1}([-\pi,\pi]),$  for  every $k\geq 1.$  Thus, we can consider Young's convolution inequality with $p=1$ leading to
$$\int_{-\pi}^{\pi}\mathcal{F}_{\xi }^{(T)}(\psi_{k})(\psi_{k})d\xi \leq \int_{-\pi}^{\pi}\mathcal{F}_{\xi }(\psi_{k})(\psi_{k})d\xi ,\quad k\geq 1.$$
\noindent  Therefore,
\begin{equation}\int_{-\pi}^{\pi}\|\mathcal{F}_{\xi }^{(T)}\|_{\mathcal{L}(\widetilde{H})}d\xi \leq \int_{-\pi}^{\pi}\|\mathcal{F}_{\xi }\|_{\mathcal{L}(\widetilde{H})}d\xi.\label{sn}\end{equation}

From equations  (\ref{feprii})--(\ref{fepriiv3}), we can apply
  Dominated Convergence Theorem, and keeping in mind
    equations  (\ref{bct2})   and (\ref{sn}), we obtain \begin{eqnarray}
&&\hspace*{-0.6cm}\lim_{T\to \infty} \left\|\int_{-\pi}^{\pi}\left[\mathcal{F}_{\omega }-\mathcal{F}_{\omega }^{(T)}\right]d\omega \right\|_{\mathcal{S}(\widetilde{H})}^{2}\nonumber\\
&&\hspace*{-0.6cm}=\sum_{k\geq 1}\lim_{T\to \infty}\int_{-\pi}^{\pi}\int_{-\pi}^{\pi}\left[\mathcal{F}_{\xi }\mathcal{F}_{\omega }-\mathcal{F}_{\xi }\mathcal{F}_{\omega }^{(T)}-\mathcal{F}_{\xi }^{(T)}\mathcal{F}_{\omega }+\mathcal{F}_{\xi }^{(T)}\mathcal{F}_{\omega }^{(T)}\right](\psi_{k})(\psi_{k})d\omega d\xi.\nonumber\\
&&\hspace*{-0.6cm}\leq \sum_{k\geq 1} 2\left[\int_{-\pi}^{\pi}
\|\mathcal{F}_{\xi }\|_{\mathcal{L}(\widetilde{H})}d\xi\right]
\lim_{T\to \infty} \int_{-\pi}^{\pi} \left|\left[\mathcal{F}_{\omega
}-\mathcal{F}_{\omega }^{(T)}\right]
(\psi_{k})(\psi_{k})\right|d\omega= 0. \label{eqintlihnormv2}
\end{eqnarray}

\end{proof}

\section{Semiparametric  estimation in the spectral domain}
\label{sththeslrd} This section introduces the estimation
methodology adopted in the functional spectral domain. Theorem
\ref{theslrd} derives the  weak  consistency   of the formulated
parametric  estimator of the long--memory operator.

Under \textbf{Assumptions I-IV},
 let $\Theta
\subset \mathbb{R}^{p},$ $p\geq 1,$ be a compact subset of
$\mathbb{R}^{p}.$ Assume that the true parameter value $\theta_{0}$
lies in the interior of $\Theta ,$ denoted as $\mbox{int} \ \Theta
.$
 The symbol   $\alpha : \mathbb{R}\times \Theta
 \longrightarrow (0,1)$ is such that
 $\alpha (\cdot ,\theta_{1})\neq \alpha (\cdot ,\theta _{2}),$
 for $\theta_{1}\neq \theta_{2},$ for every
 $\theta_{1},\theta_{2}\in \Theta .$   Thus,  under
 (\ref{sosd}), we get  indentifiability in the semiparametric model.
Denote by $\widehat{\theta }_{T}$   the estimator of the true
parameter value $\theta_{0},$ based on a functional sample of size $T.$  Hence,   $\widehat{\alpha}_{T} (\lambda ,\theta )=
 \alpha (\lambda , \widehat{\theta }_{T})$ provides the
  parametric estimator of the symbol
$\alpha(\lambda ,\theta )$ of $\mathcal{A}_{\theta }.$

Let now introduce the elements involved in the definition of our operator loss function, to compute the minimum contrast estimator $\widehat{\theta }_{T},$  under suitable conditions.
Specifically, for each $\omega \in [-\pi,\pi ],$ the weighting operator $\mathcal{W}_{\omega}$ is introduced as a bounded positive self--adjoint operator admitting the following spectral representation:
\begin{eqnarray}&& \mathcal{W}_{\omega}=
\int_{\Lambda }W(\omega ,\lambda ,\beta ) dE_{\lambda },
\nonumber\\
&&=\int_{\Lambda } \widetilde{W}(\lambda ) |\omega |^{\beta
}dE_{\lambda },\quad  \beta>0.\label{eqdeftildew}
\end{eqnarray}\noindent
 In particular, the symbol $W(\omega ,\lambda ,\beta )$  of operator $\mathcal{W}_{\omega}$ factorizes, in terms of
  $\widetilde{W}(\lambda )$ and  $|\omega |^{\beta
},$ with  $\widetilde{W}$ defining the symbol of a positive self--adjoint  operator
 $\widetilde{\mathcal{W}}\in
\mathcal{L}(\widetilde{H})$  such that
\begin{equation}m_{\widetilde{\mathcal{W}}}=\inf_{\psi\in \widetilde{H};\
   \|\psi\|_{\widetilde{H}}=1}\left\langle \widetilde{\mathcal{W}}(\psi),
   \psi\right\rangle_{\widetilde{H}},\quad M_{\widetilde{\mathcal{W}}}=
   \sup_{\psi\in \widetilde{H};\
    \|\psi\|_{\widetilde{H}}=1}\left\langle \widetilde{\mathcal{W}}(\psi),
    \psi\right\rangle_{\widetilde{H}}.
\label{dbwo}
\end{equation}

For each $\theta
\in \Theta ,$  the normalizing operator $\sigma^{2}_{\theta }$  is computed as follows:

\begin{eqnarray}&&
\sigma^{2}_{\theta }=\int_{-\pi}^{\pi}
\mathcal{F}_{\omega ,\theta }\mathcal{W}_{\omega}d\omega \nonumber\\ & &=\int_{-\pi}^{\pi} \int_{\Lambda }
\frac{M_{\omega ,\mathcal{F}}(\lambda)\widetilde{W}(\lambda )}{ |\omega|^{\alpha
(\lambda ,\theta )- \beta }}dE_{\lambda }d\omega. \label{eqdeftildew2}
\end{eqnarray}

Thus,   the symbol  $\Sigma^{2}_{\theta }$ of $\sigma^{2}_{\theta }$ is given by
\begin{eqnarray}&&\Sigma^{2}_{\theta }(\lambda )=\int_{-\pi}^{\pi} \frac{M_{\omega ,\mathcal{F}}(\lambda)\widetilde{W}(\lambda )}{ |\omega|^{\alpha
(\lambda ,\theta )- \beta }}d\omega ,\quad \forall \lambda \in  \Lambda.
\label{eqdeftildew3}
\end{eqnarray}
Under \textbf{Assumption IV(ii)} (see equation (\ref{dbnpp})),   and (\ref{dbwo}),  for every $\lambda \in \Lambda ,$
\begin{eqnarray}
&&m m_{\widetilde{\mathcal{W}}}
\left(\left[\int_{-\pi}^{-1}+\int_{1}^{\pi}\right]|\omega|^{-L(\theta )+\beta }d\omega +  \int_{-1}^{1}  |\omega|^{-l(\theta )+\beta }d\omega \right)\nonumber\\
&&=m m_{\widetilde{\mathcal{W}}}\left[\frac{(-\pi)^{1+\beta-L(\theta )}-(-1)^{1+\beta-L(\theta )}}{1+\beta-L(\theta )}\right.\nonumber\\
&&\left.+\frac{(\pi)^{1+\beta-L(\theta )}-1}{1+\beta-L(\theta )}+\frac{(-1)^{1-l(\theta )+\beta }}{1-l(\theta )+\beta}+\frac{1}{1-l(\theta )+\beta}\right]
\nonumber\end{eqnarray}\begin{eqnarray}
&&\leq
\Sigma^{2}_{\theta }(\lambda )\leq M M_{\widetilde{\mathcal{W}}}\left(\left[\int_{-\pi}^{-1}+\int_{1}^{\pi}\right]|\omega|^{-l(\theta )+\beta }d\omega +  \int_{-1}^{1}  |\omega|^{-L(\theta )+\beta }d\omega \right)\nonumber\\
&&=M M_{\widetilde{\mathcal{W}}}\left[\frac{(-\pi)^{1+\beta-l(\theta )}-(-1)^{1+\beta-l(\theta )}}{1+\beta-l(\theta )}\right.\nonumber\\
&&\left.+\frac{(\pi)^{1+\beta-l(\theta )}-1}{1+\beta-l(\theta )}+\frac{(-1)^{1-L(\theta )+\beta }}{1-L(\theta )+\beta}+\frac{1}{1-L(\theta )+\beta}\right].\label{ubno2b}
\end{eqnarray}
Thus,  $\sigma^{2}_{\theta }$ is a bounded operator. The symbol of $[\sigma^{2}_{\theta }]^{-1}$ is given by

\begin{eqnarray}
&&\left[\Sigma^{2}_{\theta }(\lambda )\right]^{-1}=
\left[\int_{-\pi}^{\pi}
\frac{M_{\omega ,\mathcal{F}}(\lambda)\widetilde{W}(\lambda )}{ |\omega|^{\alpha
(\lambda ,\theta )- \beta }}d\omega \right]^{-1},\quad \lambda \in \Lambda .\label{ubno2c}
\end{eqnarray}

From (\ref{ubno2b}),  for every $\lambda \in \Lambda ,$
\begin{eqnarray}&&\left\{m m_{\widetilde{\mathcal{W}}}\left[\frac{(-\pi)^{1+\beta-L(\theta )}-(-1)^{1+\beta-L(\theta )}}{1+\beta-L(\theta )}\right.\right.\nonumber\\
&&\left.\left.+\frac{(\pi)^{1+\beta-L(\theta )}-1}{1+\beta-L(\theta )}+\frac{(-1)^{1-l(\theta )+\beta }}{1-l(\theta )+\beta}+\frac{1}{1-l(\theta )+\beta}\right]\right\}^{-1}\nonumber\\
&&\geq \left[\Sigma^{2}_{\theta }(\lambda )\right]^{-1}\geq \left\{M M_{\widetilde{\mathcal{W}}}\left[\frac{(-\pi)^{1+\beta-l(\theta )}-(-1)^{1+\beta-l(\theta )}}{1+\beta-l(\theta )}\right.\right.\nonumber\\
&&\left.\left.+\frac{(\pi)^{1+\beta-l(\theta )}-1}{1+\beta-l(\theta )}+\frac{(-1)^{1-L(\theta )+\beta }}{1-L(\theta )+\beta}+\frac{1}{1-L(\theta )+\beta}\right]\right\}^{-1}.
\label{lbno}
\end{eqnarray}

\noindent Hence, $[\sigma^{2}_{\theta }]^{-1}$ is strictly positive and bounded.

From (\ref{eqdeftildew2}), we can consider
 the following  factorization of the spectral density operator,
 for $(\omega ,\theta )\in [-\pi,\pi]\backslash \{0\}\times \Theta ,$
 \begin{equation}\mathcal{F}_{\omega ,\theta }= \sigma^{2}_{\theta }
\Upsilon_{\omega ,\theta }=\Upsilon_{\omega ,\theta }
\sigma^{2}_{\theta },\label{fsdo}
\end{equation} \noindent where,  for each $\theta \in \Theta,$  and $\omega \in [-\pi,\pi],$  $\omega \neq 0,$
\begin{eqnarray}\Upsilon_{\omega ,\theta }
&=&\int_{\Lambda  }\Upsilon (\omega ,\lambda ,\theta )
dE_{\lambda }
\nonumber\\
&=&\int_{\Lambda  }
\frac{M_{\omega ,\mathcal{F} }(\lambda
)}{|\omega |^{\alpha (\lambda ,\theta )}\Sigma^{2}_{\theta }(\lambda )}dE_{\lambda }.
\label{standarisdo}
\end{eqnarray}

 From
equations (\ref{eqdeftildew})--(\ref{standarisdo}),
 for each  $\theta \in \Theta ,$ and any  $\varrho, \psi \in \widetilde{H},$
\begin{eqnarray}&&
\int_{-\pi}^{\pi}\Upsilon_{\omega ,\theta }\mathcal{W}_{\omega }
(\varrho)(\psi )d\omega =\int_{\Lambda }d\left\langle
E_{\lambda }(\varrho),\psi
\right\rangle_{\widetilde{H}}=\left\langle
\varrho,\psi\right\rangle_{\widetilde{H}}.
\label{identityintegralop}
\end{eqnarray} Equivalently,
$\int_{-\pi}^{\pi}\Upsilon_{\omega ,\theta }\mathcal{W}_{\omega
}d\omega $ coincides with the identity operator $I_{\widetilde{H}}$
on $\widetilde{H},$ for each $\theta \in \Theta.$

Let us now consider the empirical operator $\mathbf{U}_{T, \theta}$
given by, for each $\theta \in \Theta ,$ \begin{equation} [\mathbf{U}_{T,
\theta}]=-\int_{-\pi}^{\pi }p_{\omega }^{(T)}
\ln\left(\Upsilon_{\omega ,\theta }\right)\mathcal{W}_{\omega
} d\omega ,
\label{eco}
\end{equation}
\noindent where $T$ denotes as before the sample size. Its
theoretical counterpart $U_{\theta }$ is defined, for each $\theta
\in \Theta ,$ as
\begin{eqnarray} &&  U_{\theta }
=-\int_{-\pi}^{\pi }\mathcal{F}_{\omega ,\theta_{0}}
\ln\left(\Upsilon_{\omega ,\theta }\right)\mathcal{W}_{\omega
}d\omega \nonumber\\
&&=-\int_{-\pi}^{\pi}\int_{\Lambda  } \frac{M_{\omega ,\mathcal{F}
}(\lambda )\widetilde{W}(\lambda )}{|\omega |^{\alpha (\lambda,
\theta_{0} )-\beta }}\ln\left(\Upsilon (\omega , \lambda ,\theta
)\right)dE_{\lambda }d\omega. \label{ecoth}
\end{eqnarray}

\begin{remark}
\label{fus}
Note that, under \textbf{Assumption IV(i)}, for each $\theta \in \Theta ,$  $U_{\theta }\in
 \mathcal{L}(\widetilde{H})$
for any $\beta >0.$  Specifically,   for every $\lambda \in \Lambda ,$
$\frac{M_{\omega ,\mathcal{F}
}(\lambda )}{|\omega |^{\alpha (\lambda,
\theta_{0} ) }}\in L^{1}([-\pi,\pi]),$
and $\ln\left(\Upsilon (\omega , \lambda ,\theta
)\right)W(\omega ,\lambda ,\beta )\in L^{1}([-\pi,\pi]),$  with
\begin{eqnarray}
&&\sup_{\lambda \in \Lambda } \left|-\int_{-\pi}^{\pi}\frac{M_{\omega ,\mathcal{F}
}(\lambda )}{|\omega |^{\alpha (\lambda,
\theta_{0} ) }}\ln\left(\Upsilon (\omega , \lambda ,\theta
)\right)W(\omega ,\lambda ,\beta )d\omega \right|<\infty.
\label{ecoth2}
\end{eqnarray}
 In addition, for $T$ large, $\mathbf{U}_{T, \theta}\in \mathcal{L}(\widetilde{H})$  a.s. (see Theorem \ref{theslrd} below).
\end{remark}

We now consider the loss operator $\mathcal{K}(\theta_{0},
\theta )$ to be minimized, with respect to $\theta ,$ in the
operator norm.  Specifically,  consider, for each $\theta \in \Theta
,$
\begin{eqnarray}&&
[\mathcal{K}(\theta_{0}, \theta )]=\int_{-\pi}^{\pi}\mathcal{F}_{\omega ,\theta_{0}}
\ln\left(\Upsilon_{\omega , \theta_{0} }\Upsilon_{\omega ,\theta
}^{-1}\right)\mathcal{W}_{\omega }d\omega
\nonumber\\
&&=[U_{\theta }-U_{ \theta_{0}}]. \label{eqkld}
\end{eqnarray}
\noindent From Remark \ref{fus}, for each $\theta \in \Theta ,$
$\mathcal{K}(\theta_{0}, \theta )\in \mathcal{L}(\widetilde{H}).$ Furthermore,  the symbol of $\mathcal{K}(\theta_{0}, \theta )$
is given by
\begin{equation}
\int_{-\pi}^{\pi}\frac{M_{\omega ,\mathcal{F}}(\lambda)}{ |\omega|^{\alpha
(\lambda ,\theta_{0} )}}\ln\left(\frac{\Upsilon (\omega , \lambda , \theta_{0})}{\Upsilon (\omega , \lambda , \theta )
}\right) W(\omega, \lambda ,\beta )d\omega , \quad \lambda \in \Lambda , \quad \theta \in \Theta.
\label{ecoth3}
\end{equation}

 Operator
$[\sigma^{2}_{\theta_{0}}]^{-1}\mathcal{K}(\theta_{0}, \theta )$
could be interpreted as a  weighted  Kullback--Leibler divergence
operator, measuring the  discrepancy  between the two semiparametric
functional spectral models $\Upsilon_{\omega ,\theta_{0}}$ and
$\Upsilon_{\omega ,\theta },$  for each $\theta \in \Theta $ (see,
e.g., \cite{Cover91}). Note  that, from equations
(\ref{eqdeftildew2})--(\ref{eqkld}),
  applying Jensen's
   inequality, for every $k\geq 1,$  and $\theta \in \Theta ,$
  \begin{eqnarray}
 && -[\mathcal{K}(\theta_{0},\theta )](\psi_{k})(\psi_{k})
  \nonumber\\
  &&\leq\left\|\sigma_{\theta }^{2}\right\|_{\mathcal{L}(\widetilde{H})} \ln\left(\int_{-\pi}^{\pi}\int_{\Lambda }
  \Upsilon (\omega ,\lambda ,\theta )W(\omega ,\lambda ,\beta )
  d\left\langle E_{\lambda
}(\psi_{k}),\psi_{k}\right\rangle_{\widetilde{H}}d\omega \right)\nonumber\\
&&=\left\|\sigma_{\theta }^{2}\right\|_{\mathcal{L}(\widetilde{H})}
\ln\left(\int_{-\pi}^{\pi}\Upsilon_{\omega ,\theta
}\mathcal{W}_{\omega }(\psi_{k})(\psi_{k})
  d\omega \right)
\nonumber\\&&=\left\|\sigma_{\theta
}^{2}\right\|_{\mathcal{L}(\widetilde{H})}
\ln\left(\|\psi_{k}\|_{\widetilde{H}}^{2}\right)=0. \label{eqjibb}
  \end{eqnarray}\noindent  for any orthonormal basis $\{\psi_{k},\ k\geq 1\}$ of $\widetilde{H}.$
From equation (\ref{eqjibb}), for every $k\geq 1,$
  and $\theta \in \Theta ,$
  $[\mathcal{K}(\theta_{0},\theta )](\psi_{k})(\psi_{k})\geq 0.$ Thus, $\left\{\mathcal{K}(\theta_{0},\theta ),\ \theta \in \Theta \right\}$ is a parametric family of non--negative self--adjoint  bounded operators  such that \begin{eqnarray}
 && \|\mathcal{K}(\theta_{0},\theta )\|_{\mathcal{L}(\widetilde{H})}=\sup_{k\geq 1}[\mathcal{K}(\theta_{0},\theta )](\psi_{k})(\psi_{k})>0,\quad \theta  \neq  \theta_{0}\nonumber\\
  && \|\mathcal{K}(\theta_{0},\theta )\|_{\mathcal{L}(\widetilde{H})}=\sup_{k\geq 1}[\mathcal{K}(\theta_{0},\theta )](\psi_{k})(\psi_{k})=0\ \Leftrightarrow \  \theta =  \theta_{0}.
  \label{eqjibbh}
  \end{eqnarray}

  Hence, from equations (\ref{eqjibbh}),
\begin{eqnarray} \theta_{0}&=&
\mbox{arg} \
 \min_{\theta \in \Theta }
 \left\|[\mathcal{K}(\theta_{0},\theta )]
 \right\|_{\mathcal{L}(\widetilde{H})}\nonumber\\
&=&\mbox{arg} \
 \min_{\theta \in \Theta } \sup_{k\geq 1}\mathcal{K}(\theta_{0},\theta
  )(\psi_{k})(\psi_{k})\nonumber\\
&=&\mbox{arg} \ \min_{\theta \in \Theta } \sup_{k\geq 1}U_{\theta
}(\psi_{k})(\psi_{k}). \label{mfethetabb}\end{eqnarray}

We then consider the following estimator  $\widehat{\theta }_{T}$   computed from the empirical contrast operator $U_{T, \theta}$  in
 (\ref{eco}), and a given orthonormal basis $\{\psi_{k},\
k\geq 1\}$ of $\widetilde{H}:$

\begin{eqnarray}
\widehat{\theta }_{T}&=& \mbox{arg} \ \min_{\theta \in \Theta }
\sup_{k\geq 1}U_{T,\theta }(\psi_{k})(\psi_{k}). \label{mfetheta}
\end{eqnarray}

\begin{theorem}
\label{theslrd} Let $\{X_{t},\ t\in \mathbb{Z}\}$  be a
stationary zero--mean
 Gaussian  functional sequence satisfying  \textbf{Assumptions I--IV}.
  Consider in \textbf{Assumption IV(ii)}  the particular case where $\mathcal{M}_{\omega  ,\mathcal{F}}$  satisfies, for any $\xi>0,$
\begin{equation}\lim_{\omega \to 0}\left\|\mathcal{M}_{\omega /\xi ,\mathcal{F}}\mathcal{M}_{ \omega  ,\mathcal{F}}^{-1}-I_{\widetilde{H}}\right\|_{\mathcal{L}(\widetilde{H})}=0.\label{svfo}
\end{equation}
 Under  the conditions reflected
 in
equations (\ref{eqdeftildew})--(\ref{eqjibb}), for $\beta >1,$ we then have

\begin{eqnarray}
&& E\left\|\int_{-\pi}^{\pi}\left[p_{\omega
}^{(T)}-\mathcal{F}_{\omega ,\theta_{0}}\right]\mathcal{W}_{\omega
,\theta }d\omega \right\|_{\mathcal{S}(\widetilde{H})} \to
0,\quad T\to \infty, \label{eqtlimits1}
\end{eqnarray}
\noindent where, for $(\omega , \theta)\in [-\pi,\pi]\times \Theta ,$
\begin{equation}\mathcal{W}_{\omega
,\theta }= \ln\left(\Upsilon_{\omega ,\theta
}\right)\mathcal{W}_{\omega
}.\label{eqwo}
\end{equation}

Furthermore, the
estimator $\widehat{\theta }_{T}$  in (\ref{mfetheta})
satisfies
$$\widehat{\theta }_{T}\to_{P} \theta_{0},\quad T\to \infty,$$
\noindent where $\to_{P}$ denotes convergence in probability.
\end{theorem}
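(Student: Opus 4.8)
The plan is to split the statement into the operator limit (\ref{eqtlimits1}) and the consistency of $\widehat{\theta}_{T}$, deriving the latter from the former. For (\ref{eqtlimits1}) I would first insert the mean of the periodogram, writing
\[
p_{\omega}^{(T)}-\mathcal{F}_{\omega ,\theta_{0}}
=\left(p_{\omega}^{(T)}-\mathcal{F}_{\omega}^{(T)}\right)
+\left(\mathcal{F}_{\omega}^{(T)}-\mathcal{F}_{\omega ,\theta_{0}}\right),
\]
so that, since $E[p_{\omega}^{(T)}]=\mathcal{F}_{\omega}^{(T)}$ (Remark \ref{rem2}), the integrated first summand is centred and the second is deterministic. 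The deterministic (bias) contribution is controlled by a weighted version of Theorem \ref{pr1}: the symbol $W(\omega ,\lambda ,\theta)$ of $\mathcal{W}_{\omega ,\theta}$ in (\ref{eqwo}) is bounded on $[-\pi,\pi]\times\Lambda\times\Theta$, because $|\omega|^{\beta}\ln|\omega|\to 0$ as $\omega\to 0$ while $M_{\omega ,\mathcal{F}}(\lambda)$ and $\Sigma^{2}_{\theta}(\lambda)$ stay bounded away from $0$ and $\infty$ under \textbf{Assumption IV(ii)} and (\ref{ubno2b}); hence $\mathcal{W}_{\omega ,\theta}$ is uniformly bounded in $\mathcal{L}(\widetilde{H})$, and the argument of Theorem \ref{pr1}, combined with the pointwise convergence $\mathcal{F}_{\omega}^{(T)}(\psi_{k})(\psi_{l})\to\mathcal{F}_{\omega}(\psi_{k})(\psi_{l})$ and the dominating bound from Lemma \ref{lempo}, gives $\bigl\|\int_{-\pi}^{\pi}(\mathcal{F}_{\omega}^{(T)}-\mathcal{F}_{\omega ,\theta_{0}})\mathcal{W}_{\omega ,\theta}d\omega\bigr\|_{\mathcal{S}(\widetilde{H})}\to 0$ by dominated convergence.

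For the centred (fluctuation) term I would use the Gaussian hypothesis. Setting $Z_{T}=\int_{-\pi}^{\pi}(p_{\omega}^{(T)}-\mathcal{F}_{\omega}^{(T)})\mathcal{W}_{\omega ,\theta}d\omega$, Jensen's inequality gives $E\|Z_{T}\|_{\mathcal{S}(\widetilde{H})}\le(E\|Z_{T}\|_{\mathcal{S}(\widetilde{H})}^{2})^{1/2}$, and expanding the Hilbert--Schmidt norm in the basis $\{\psi_{k}\}$ reduces the bound to a double frequency integral of the covariances $\mathrm{Cov}\bigl(p_{\omega}^{(T)}(\psi_{k})(\psi_{l}),\,p_{\xi}^{(T)}(\psi_{k})(\psi_{l})\bigr)$. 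Because the process is Gaussian, the fourth--order moments of the fDFT factor by Isserlis' theorem into products of the second--order quantities $E[\widetilde{X}_{\omega}^{(T)}\otimes\widetilde{X}_{-\xi}^{(T)}]$, so these covariances are expressible through $\mathcal{F}_{\cdot}^{(T)}$ and the F\'ejer kernel (\ref{eqfkd}); the near--diagonal contribution $\omega\approx\xi$ carries a factor $T^{-1}$, while the off--diagonal part is asymptotically negligible. The condition $\beta>1$ together with $\alpha(\lambda ,\theta)<1$ (\textbf{Assumption IV(i)}) makes the product symbol behave like $|\omega|^{\beta-\alpha(\lambda ,\theta_{0})}$ near the origin, i.e. it vanishes at $\omega=0$, so $\mathcal{F}_{\omega ,\theta_{0}}\mathcal{W}_{\omega ,\theta}$ carries no zero--frequency singularity and all the resulting integrals are finite; the slow variation (\ref{svfo}) ensures this scaling is uniform in the relevant frequency ratios. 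Consequently $E\|Z_{T}\|_{\mathcal{S}(\widetilde{H})}^{2}=O(T^{-1})\to 0$, which together with the bias estimate establishes (\ref{eqtlimits1}).

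For consistency, observe that $U_{T,\theta}-U_{\theta}=-\int_{-\pi}^{\pi}(p_{\omega}^{(T)}-\mathcal{F}_{\omega ,\theta_{0}})\mathcal{W}_{\omega ,\theta}d\omega$, so (\ref{eqtlimits1}) yields $E\|U_{T,\theta}-U_{\theta}\|_{\mathcal{S}(\widetilde{H})}\to 0$, hence $U_{T,\theta}\to_{P}U_{\theta}$ in $\mathcal{S}(\widetilde{H})$, and a fortiori in $\mathcal{L}(\widetilde{H})$. I would then upgrade this to convergence uniform over the compact $\Theta$: since $\theta\mapsto\alpha(\lambda ,\theta)$ is continuous and $\Theta$ is compact, the maps $\theta\mapsto\sup_{k}U_{T,\theta}(\psi_{k})(\psi_{k})$ and $\theta\mapsto\sup_{k}U_{\theta}(\psi_{k})(\psi_{k})$ are stochastically equicontinuous, so pointwise convergence promotes to uniform convergence in probability. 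Finally, by the identifiability in (\ref{eqjibbh})--(\ref{mfethetabb}), $\theta_{0}$ is the unique minimiser of $\theta\mapsto\sup_{k}U_{\theta}(\psi_{k})(\psi_{k})$, and the standard argmin theorem for $M$--estimators then gives $\widehat{\theta}_{T}\to_{P}\theta_{0}$. The principal obstacle is the fluctuation term: the Isserlis expansion of the functional periodogram covariance, the extraction of the $T^{-1}$ F\'ejer--kernel scaling, and the verification that the $\beta>1$ weighting cancels the zero--frequency singularity are where the real work lies, the bias and identifiability steps being comparatively routine.
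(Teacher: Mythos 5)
Your proposal follows essentially the same route as the paper: the same centring decomposition $p_{\omega}^{(T)}-\mathcal{F}_{\omega ,\theta_{0}}=(p_{\omega}^{(T)}-\mathcal{F}_{\omega}^{(T)})+(\mathcal{F}_{\omega}^{(T)}-\mathcal{F}_{\omega ,\theta_{0}})$, the bias handled by Theorem \ref{pr1} together with the uniform $\mathcal{L}(\widetilde{H})$--boundedness of $\mathcal{W}_{\omega ,\theta}$, the fluctuation term reduced via the Gaussian (Isserlis) factorization and the multidimensional F\'ejer kernel to an $\mathcal{O}(T^{-1})$ bound with $\beta>1$ absorbing the zero--frequency singularity, and consistency obtained from uniform convergence over the compact $\Theta$ plus identifiability. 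The only cosmetic difference is that you invoke stochastic equicontinuity abstractly where the paper proves it explicitly through the Lipschitz--in--$\theta$ bound (\ref{mc}) and the tightness statements (\ref{pconv0})--(\ref{pconv}), and uses a subsequence/contradiction argument in place of the generic argmin theorem.
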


\begin{proof}

The operator   $\mathcal{W}_{\omega ,\theta }$ introduced in (\ref{eqwo}) admits the spectral representation
  \begin{eqnarray}&&\hspace*{-1cm}\mathcal{W}_{\omega
,\theta }= \int_{\Lambda}\left[\ln\left(M_{\omega }(\lambda
)\right)-\ln\left(\Sigma_{\theta }^{2}(\lambda )
\right)\right.\nonumber\\
&&\left. \hspace*{1.5cm}-\alpha (\lambda ,\theta ) \ln\left(
|\omega |\right)\right] \widetilde{W}(\lambda )|\omega |^{\beta
}d E_{\lambda },\label{defwo}
\end{eqnarray}
 \noindent for
$\omega \in [-\pi,\pi],$ and $\theta \in \Theta.$ From
(\ref{defwo}),
 \begin{eqnarray}&&
 \left\|\mathcal{W}_{\omega ,\theta }\right\|_{\mathcal{L}(\widetilde{H})}\leq \left\|\ln\left(\mathcal{M}_{\omega ,\mathcal{F}}\right)|\omega |^{\beta }\widetilde{\mathcal{W}}_{\omega }\right\|_{\mathcal{L}(\widetilde{H})}\nonumber\\
 &&\hspace*{1cm}+\left\|\mathcal{A}_{\theta }\ln\left( |\omega |\right)
 |\omega |^{\beta }\widetilde{\mathcal{W}}_{\omega }\right\|_{\mathcal{L}(\widetilde{H})}+\left\|\ln \left(\sigma^{2}_{\theta }\right)
 |\omega |^{\beta }\widetilde{\mathcal{W}}_{\omega }\right\|_{\mathcal{L}(\widetilde{H})}\nonumber\\
 &&\leq \ln(M)\pi^{\beta }M_{\widetilde{\mathcal{W}}}+
 L(\theta )\ln(\pi)\pi^{\beta }M_{\widetilde{\mathcal{W}}}+\left\|\ln \left(\sigma^{2}_{\theta }\right)\right\|_{\mathcal{L}(\widetilde{H})}
 \pi^{\beta }M_{\widetilde{\mathcal{W}}},
\label{bonwo}\end{eqnarray}\noindent
 for every
$\theta \in \Theta ,$ $\omega \in [-\pi,\pi],$ and $\beta >0.$

From (\ref{bonwo}),
\begin{eqnarray}&&\sup_{\omega \in [-\pi,\pi]}\left\|\mathcal{W}_{\omega ,\theta }\right\|_{\mathcal{L}(\widetilde{H})}\leq
\ln(M)\pi^{\beta }M_{\widetilde{\mathcal{W}}}+
 L(\theta )\ln(\pi)\pi^{\beta }M_{\widetilde{\mathcal{W}}}\nonumber\\
 &&\hspace*{2cm} +\left\|\ln \left(\sigma^{2}_{\theta }\right)\right\|_{\mathcal{L}(\widetilde{H})}
 \pi^{\beta }M_{\widetilde{\mathcal{W}}}=\mathcal{H}(\theta ).\nonumber\\
\label{bonwovv}\end{eqnarray}

Thus, the
family $\left\{ \mathcal{W}_{\omega ,\theta },\ \omega \in
[-\pi,\pi]\right\}$ is equicontinuous, for any $\theta \in \Theta.$
We first  prove that the following limits hold, for each  $\theta
\in \Theta ,$
\begin{eqnarray}
&&\left\|\int_{-\pi}^{\pi}\left[\mathcal{F}_{\omega, \theta_{0} }^{(T)}
-\mathcal{F}_{\omega ,\theta_{0}}\right] \mathcal{W}_{\omega ,\theta }
d\omega \right\|_{\mathcal{S}(\widetilde{H})}\to 0,\quad T\to \infty
\label{eqtlimits0}\\
&& E\left\|\int_{-\pi}^{\pi}\left[p_{\omega
}^{(T)}-\mathcal{F}^{(T)}_{\omega,\theta_{0}}\right]\mathcal{W}_{\omega
,\theta }d\omega \right\|^{2}_{\mathcal{S}(\widetilde{H})} \to
0,\quad T\to \infty, \label{eqtlimits1b}
\end{eqnarray}\noindent
where $E\left(p_{\omega }^{(T)}\right)=
\mathcal{F}^{(T)}_{\omega,\theta_{0}}.$

 From Theorem \ref{pr1},
 and equation (\ref{bonwovv}),
\begin{eqnarray}
&&\left\|\int_{-\pi}^{\pi}\left[E\left(p_{\omega }^{(T)}
\right)-\mathcal{F}_{\omega ,\theta_{0}}\right]
\mathcal{W}_{\omega ,\theta }d\omega\right\|_{\mathcal{S}(\widetilde{H})}\nonumber\\
&& \leq\mathcal{H}(\theta )\left\| \int_{-\pi}^{\pi}
\left[\mathcal{F}^{(T)}_{\omega,\theta_{0}}
-\mathcal{F}_{\omega,\theta_{0}}\right]d\omega
\right\|_{\mathcal{S}(\widetilde{H})}\to
0,\quad T\to \infty.
 \label{limuk}
\end{eqnarray}

Under the Gaussian  distribution of
$\{ X_{t},\ t\in \mathbb{Z}\},$    applying
Fourier Transform Inversion Formula,  we obtain
\begin{eqnarray}&&
E\left\| \int_{-\pi}^{\pi}\left[p_{\omega
}^{(T)}-\mathcal{F}^{(T)}_{\omega,\theta_{0}}\right]
\mathcal{W}_{\omega ,\theta }d\omega\right\|_{\mathcal{S}
(\widetilde{H})}^{2}\nonumber\\
&& =\sum_{k\geq 1}\int_{-\pi}^{\pi}\int_{-\pi}^{\pi}\left[E\left[p_{\xi
}^{(T)}p_{\omega }^{(T)}\right]+ \mathcal{F}^{(T)}_{\xi,\theta_{0}}
\mathcal{F}^{(T)}_{\omega,\theta_{0}}
-\mathcal{F}^{(T)}_{\xi,\theta_{0}} E\left[p_{\omega
}^{(T)}\right]\right.\nonumber\\
&&\left. \hspace*{0.5cm}-E\left[p_{\xi
}^{(T)}\right]\mathcal{F}^{(T)}_{\omega,\theta_{0}}\right]
\mathcal{W}^{\star}_{\xi ,\theta }\mathcal{W}_{\omega ,\theta }
(\psi_{k})(\psi_{k})d\omega d\xi \nonumber\\
&& =\sum_{k\geq 1}\int_{-\pi}^{\pi}\int_{-\pi}^{\pi}\left[E\left[p_{\xi
}^{(T)}p_{\omega
}^{(T)}\right]-\mathcal{F}^{(T)}_{\xi,\theta_{0}}\mathcal{F}^{(T)}_{\omega,\theta_{0}}\right]
\mathcal{W}^{\star}_{\xi ,\theta
}\mathcal{W}_{\omega ,\theta }
 (\psi_{k})(\psi_{k})d\omega d\xi\nonumber\\
&& =\frac{1}{(2\pi T)^{2}}\sum_{k\geq 1}\int_{-\pi}^{\pi}\int_{-\pi}^{\pi}
\left[\sum_{t_{1}, s_{1},t_{2},s_{2}=1}^{T}\exp\left(-i\omega
(t_{1}-s_{1})-i\xi (t_{2}-s_{2})\right)\right. \nonumber\\
&&\left.\times \left[E\left[X_{t_{1}}\otimes X_{s_{1}}\otimes
X_{t_{2}}\otimes X_{s_{2}}\right]-E\left[X_{t_{1}}\otimes
X_{s_{1}}\right]E\left[X_{t_{2}}\otimes X_{s_{2}}\right]
\right]\right]\nonumber\\
&& \hspace*{3cm} \mathcal{W}^{\star}_{\xi ,\theta
}\mathcal{W}_{\omega ,\theta } (\psi_{k})(\psi_{k})d\omega
d\xi\nonumber\\ && =\frac{1}{(2\pi T)^{2}}\sum_{k\geq
1}\int_{-\pi}^{\pi}\int_{-\pi}^{\pi}\left[\sum_{t_{1},
s_{1},t_{2},s_{2}=1}^{T}\exp\left(-i\omega
(t_{1}-s_{1})-i\xi (t_{2}-s_{2})\right)\right. \nonumber\\
&&\left.\times \left[E\left[X_{t_{1}}\otimes
X_{t_{2}}\right]E\left[X_{s_{1}}\otimes X_{s_{2}}\right]
 +E\left[X_{t_{1}}\otimes X_{s_{2}}\right]E\left[X_{t_{2}}\otimes
X_{s_{1}}\right] \right]\right]\nonumber\\&&
\hspace*{3cm}\mathcal{W}^{\star}_{\xi ,\theta }\mathcal{W}_{\omega
,\theta } (\psi_{k})(\psi_{k})d\omega d\xi \nonumber\\ &&
=\frac{2\pi}{T}\sum_{k\geq
1}\int_{-\pi}^{\pi}\int_{-\pi}^{\pi}\int_{-\pi}^{\pi}\int_{-\pi}^{\pi}\mathcal{F}_{\widetilde{\omega
},\theta_{0}}\mathcal{F}_{\widetilde{\xi},\theta_{0}}\left[\frac{1}{[2\pi]^{3}T}\sum_{t_{1},
s_{1},t_{2},s_{2}=1}^{T}\exp\left(i t_{1}(\widetilde{\omega }-\omega
)\right)\right.\nonumber\\
&&\left.\hspace*{3cm}\times \exp\left(i s_{1}(\omega +\widetilde{\xi})+it_{2}(-\xi-\widetilde{\omega
})+is_{2}(\xi-\widetilde{\xi })\right)\right.\nonumber\\&&\left.
+\exp\left(it_{1}(\widetilde{\omega }-\omega)+is_{1}(\omega
+\widetilde{\xi })+it_{2}(-\xi-\widetilde{\xi }
)+is_{2}(\xi-\widetilde{\omega
})\right)\right]\nonumber\\
&& \hspace*{2cm}\mathcal{W}^{\star}_{\xi ,\theta
}\mathcal{W}_{\omega ,\theta } (\psi_{k})(\psi_{k})d\omega d\xi
d\widetilde{\omega }
d\widetilde{\xi }\nonumber\\
&&=\frac{2\pi}{T}\sum_{k\geq
1}\int_{-\pi}^{\pi}\int_{f_{1}(\omega )}^{f_{2}(\omega )}\int_{g_{1}(\omega )}^{g_{2}(\omega)}\int_{h_{1}(\omega, u_{1})}^{h_{2}(\omega ,u_{1})}\Phi_{T}^{4}(u_{1},u_{2},u_{3})
\mathcal{F}_{u_{1}+\omega ,\theta_{0}}\mathcal{F}_{u_{2}-\omega
,\theta_{0}}\nonumber\\
&&\hspace*{3cm}\mathcal{W}^{\star}_{-(u_{1}+u_{3}+\omega ),\theta
}\mathcal{W}_{\omega ,\theta } (\psi_{k})(\psi_{k})du_{3}du_{2}du_{1}d\omega
\nonumber\\
&&+\frac{2\pi}{T}\sum_{k\geq
1}\int_{-\pi}^{\pi}\int_{f_{1}(\omega )}^{f_{2}(\omega )}\int_{g_{1}(\omega )}^{g_{2}(\omega)}\int_{\widetilde{h}_{1}(\omega, \widetilde{u}_{1})}^{\widetilde{h}_{2}(\omega ,\widetilde{u}_{1})}\Phi_{T}^{4}(\widetilde{u}_{1},\widetilde{u}_{2},\widetilde{u}_{3})
\mathcal{F}_{\widetilde{u}_{1}+\omega
,\theta_{0}}\mathcal{F}_{\widetilde{u}_{2}-\omega
,\theta_{0}}
\nonumber\end{eqnarray}\begin{eqnarray}
&&\hspace*{3cm}\mathcal{W}^{\star}_{\widetilde{u}_{3}-\widetilde{u}_{1}
-\omega ,\theta } \mathcal{W}_{\omega ,\theta }
(\psi_{k})(\psi_{k})d\widetilde{u}_{3}d\widetilde{u}_{2}d\widetilde{u}_{1}d\omega
\nonumber\\
&& = \frac{2\pi}{T}\int_{-\pi}^{\pi}\int_{f_{1}(\omega )}^{f_{2}(\omega )}\int_{g_{1}(\omega )}^{g_{2}(\omega)}\int_{h_{1}(\omega, u_{1})}^{h_{2}(\omega ,u_{1})}
\Phi_{T}^{4}(u_{1},u_{2},u_{3})\nonumber\\
&& \hspace*{2cm}\times
\left\langle \mathcal{F}_{u_{1}+\omega
,\theta_{0}}\mathcal{W}_{\omega,\theta }, \mathcal{F}_{u_{2}-\omega
,\theta_{0}}\mathcal{W}_{-(u_{1}+u_{3}+\omega ),\theta }
\right\rangle_{\mathcal{S}(\widetilde{H})}du_{3} du_{2}du_{1}d\omega \nonumber\\
&&+\frac{2\pi}{T}\int_{-\pi}^{\pi}\int_{f_{1}(\omega )}^{f_{2}(\omega )}\int_{g_{1}(\omega )}^{g_{2}(\omega)}\int_{\widetilde{h}_{1}(\omega, \widetilde{u}_{1})}^{\widetilde{h}_{2}(\omega ,\widetilde{u}_{1})}\Phi_{T}^{4}(\widetilde{u}_{1},\widetilde{u}_{2},\widetilde{u}_{3})
\nonumber\\
&&\hspace*{2cm} \times \left\langle \mathcal{F}_{\widetilde{u}_{1}+\omega
,\theta_{0}}\mathcal{W}_{\omega ,\theta }
,\mathcal{F}_{\widetilde{u}_{2}-\omega
,\theta_{0}}\mathcal{W}_{\widetilde{u}_{3}-\widetilde{u}_{1} -\omega , \theta
}\right\rangle_{\mathcal{S}(\widetilde{H})}
d\widetilde{u}_{3}d\widetilde{u}_{2}d\widetilde{u}_{1}d\omega\nonumber\\
&&\leq  \frac{\mathcal{K}\pi}{T}\int_{[-\pi,\pi]^{4}}
\Phi_{4T}^{4}(u_{1},u_{2},u_{3})\nonumber\\
&& \hspace*{1cm}\times
\left\langle \mathcal{F}_{2u_{1}+\omega
,\theta_{0}}\mathcal{W}_{\omega,\theta }, \mathcal{F}_{2u_{2}-\omega
,\theta_{0}}\mathcal{W}_{-(2u_{1}+4u_{3}+\omega ),\theta }
\right\rangle_{\mathcal{S}(\widetilde{H})}d\omega  du_{1}du_{2}du_{3} \nonumber\\
&&+\frac{\mathcal{K}\pi}{T}\int_{[-\pi,\pi]^{4}}\Phi_{4T}^{4}(\widetilde{u}_{1},\widetilde{u}_{2},\widetilde{u}_{3})
\nonumber\\
&&\hspace*{1cm} \times \left\langle \mathcal{F}_{2\widetilde{u}_{1}+\omega
,\theta_{0}}\mathcal{W}_{\omega ,\theta }
,\mathcal{F}_{2\widetilde{u}_{2}-\omega
,\theta_{0}}\mathcal{W}_{4\widetilde{u}_{3}-2\widetilde{u}_{1} -\omega , \theta
}\right\rangle_{\mathcal{S}(\widetilde{H})}d\omega d\widetilde{u}_{1}d\widetilde{u}_{2}
d\widetilde{u}_{3},\nonumber\\
 \label{eqshnorm}
\end{eqnarray}
\noindent where, for $\omega \in [-\pi,\pi],$  $f_{1}(\omega )=-\pi-\omega,$  $f_{2}(\omega )=\pi-\omega ,$  $g_{1}(\omega )=-\pi+\omega ,$ $g_{2}(\omega )=\pi+\omega ,$
$h_{1}(\omega , u_{1})= -\pi-u_{1}-\omega ,$ $h_{2}(\omega , u_{1})=\pi-u_{1}-\omega,$
$\widetilde{h}_{1}(\omega , \widetilde{u}_{1})= -\pi+u_{1}+\omega ,$ $h_{2}(\omega , u_{1})=\pi+u_{1}+\omega.$
For $v_{4}=-(v_{1}+v_{2}+v_{3}),$ $v_{j}\in
[-\pi,\pi],$ $j=1,2,3,4,$   in (\ref{eqshnorm}), the multidimensional kernel  $\Phi_{T}^{4}$  of F\'ejer
type is defined as follows:'
\begin{eqnarray}
&&\Phi_{T}^{4}(v_{1},v_{2},v_{3},v_{4})=\Phi_{T}^{4}(v_{1},v_{2},v_{3})\nonumber\\
&&=\frac{1}{(2\pi)^{3}T}\sum_{t_{1},
s_{1},t_{2},s_{2}=1}^{T}\exp\left(i(t_{1}v_{1}+s_{1}v_{2}+t_{2}v_{3}+s_{2}v_{4})\right)\nonumber\\
&&\hspace*{2cm}=\frac{1}{(2\pi)^{3}T}\prod_{j=1}^{4}\frac{\sin(Tv_{j}/2)}{\sin(v_{j}/2)}\label{mfk}
\end{eqnarray}
\noindent (see, e.g., equation (6.6)  in \cite{Anh07}).

Denote in equation (\ref{eqshnorm}), for
each $k\geq 1,$ and $u_{i}\in [-\pi,\pi],$ $i=1,2,3,$
$\theta \in \Theta,$
\begin{eqnarray}&&
\hspace*{-0.5cm} G_{k1,\theta }(u_{1},u_{2},u_{3})=\int_{-\pi}^{\pi}\left\langle \mathcal{F}_{2u_{1}+\omega
,\theta_{0}}\mathcal{W}_{\omega,\theta }(\psi_{k}), \mathcal{F}_{2u_{2}-\omega
,\theta_{0}}\mathcal{W}_{-(2u_{1}+4u_{3}+\omega ),\theta }(\psi_{k})
\right\rangle_{\widetilde{H}}d\omega \nonumber\\
&& \hspace*{-0.5cm}  G_{k2,\theta }(u_{1},u_{2},u_{3})=\int_{-\pi}^{\pi}\left\langle \mathcal{F}_{2\widetilde{u}_{1}+\omega
,\theta_{0}}\mathcal{W}_{\omega ,\theta }(\psi_{k})
,\mathcal{F}_{2\widetilde{u}_{2}-\omega
,\theta_{0}}\mathcal{W}_{4\widetilde{u}_{3}-2\widetilde{u}_{1} -\omega , \theta
}(\psi_{k})\right\rangle_{\widetilde{H}}d\omega \nonumber\end{eqnarray}\begin{eqnarray}
&&
\sum_{k\geq 1}G_{k1,\theta }(u_{1},u_{2},u_{3})=\int_{-\pi}^{\pi}\left\langle \mathcal{F}_{2u_{1}+\omega
,\theta_{0}}\mathcal{W}_{\omega,\theta }, \mathcal{F}_{2u_{2}-\omega
,\theta_{0}}\mathcal{W}_{-(2u_{1}+4u_{3}+\omega ),\theta }
\right\rangle_{\mathcal{S}(\widetilde{H})}d\omega \nonumber\\
&&\sum_{k\geq 1}G_{k2,\theta }(\widetilde{u}_{1},\widetilde{u}_{2},\widetilde{u}_{3})=\int_{-\pi}^{\pi}\left\langle \mathcal{F}_{2\widetilde{u}_{1}+\omega
,\theta_{0}}\mathcal{W}_{\omega ,\theta }
,\mathcal{F}_{2\widetilde{u}_{2}-\omega
,\theta_{0}}\mathcal{W}_{4\widetilde{u}_{3}-2\widetilde{u}_{1} -\omega , \theta
}\right\rangle_{\mathcal{S}(\widetilde{H})}d\omega.\nonumber\\
\label{Gfunctbb}
\end{eqnarray}

   From equations (\ref {bonwo})  and  (\ref{bonwovv}),  for  each  $\theta \in \Theta ,$  considering $\gamma=\beta -1>0,$
   \begin{eqnarray}
  && \left\|\mathcal{F}_{\xi}\mathcal{F}_{\omega }\mathcal{W}_{\widetilde{\xi },\theta }\mathcal{W}_{\widetilde{\omega },\theta }\right\|_{\mathcal{L}(\widetilde{H})}
  \nonumber\\
  &&\leq M^{2}\left[\pi^{2(1-l(\theta ))}\right]\left[[\ln(M)]^{2}\pi^{2\gamma }M_{\widetilde{W}}^{2}+\left[L(\theta )\ln(\pi)\pi^{\gamma }M_{\widetilde{W}}\right]^{2}\right.\nonumber\\ &&+\left.\left\|\ln\left(\sigma_{\theta }^{2}\right)\right\|_{\mathcal{L}(\widetilde{H})}^{2}\left(\pi^{\gamma }M_{\widetilde{W}}\right)^{2}\right],\quad \forall \xi,\omega, \widetilde{\omega }, \widetilde{\xi } \in [-\pi,\pi].
   \label{bougk1zero2}
\end{eqnarray}

Thus,  we can apply Bounded Convergence Theorem to obtain, for each $k\geq 1,$
\begin{eqnarray}&& \lim_{u_{i}\to 0,\ i=1,2,3}
G_{k1,\theta }(u_{1},u_{2},u_{3})=\int_{-\pi}^{\pi}\lim_{u_{i}\to 0,\ i=1,2,3}\mathcal{F}_{2u_{1}+\omega
,\theta_{0}}\mathcal{F}_{2u_{2}-\omega
,\theta_{0}} \nonumber\\
&& \hspace*{4.5cm} \times \mathcal{W}^{\star }_{-(2u_{1}+4u_{3}+\omega ),\theta }
\mathcal{W}_{\omega,\theta }(\psi_{k})(\psi_{k})d\omega \nonumber\\
&&=G_{k1,\theta}(0,0,0)\nonumber\\
&& =\lim_{\widetilde{u}_{i}\to 0,\ i=1,2,3}G_{k2,\theta
}(\widetilde{u}_{1},\widetilde{u}_{2},\widetilde{u}_{3})=
\int_{-\pi}^{\pi}\lim_{\widetilde{u}_{i}\to 0,\ i=1,2,3} \mathcal{F}_{2\widetilde{u}_{1}+\omega
,\theta_{0}}\mathcal{F}_{2\widetilde{u}_{2}-\omega
,\theta_{0}}\nonumber\\
&&\hspace*{5cm}\times \mathcal{W}^{\star
}_{4\widetilde{u}_{3}-2\widetilde{u}_{1} -\omega , \theta
}\mathcal{W}_{\omega ,\theta }(\psi_{k})(\psi_{k})d\omega
\nonumber\\
&&=G_{k2, \theta }(0,0,0),
\label{bct}
\end{eqnarray}

\noindent which means that $G_{ki},$ $i=1,2,$ are continuous  at zero, and  uniform convergence holds in the limits of their convolutions with multidimensional  F\'ejer kernel. Particularly,
\begin{eqnarray}&&\lim_{T\to \infty}\int_{[-\pi,\pi]^{3}}
\hspace*{-1cm}\Phi_{4T}^{4}(v_{1},v_{2},v_{3})G_{ki,\theta}(v_{1},v_{2},v_{3})
dv_{1}dv_{2}dv_{3} = G_{ki,\theta}(0,0,0),\nonumber\\\label{zl}
\end{eqnarray}
\noindent for each $k\geq 1,$  $i=1,2,$ and $\theta \in \Theta .$

Furthermore,  the absolute integrability of the functions  $$\mathcal{G}_{1}(u_{1},u_{2},u_{3})=\sum_{k\geq 1}G_{k1,\theta }(u_{1},u_{2},u_{3}), \ \mathcal{G}_{2}(u_{1},u_{2},u_{3})=\sum_{k\geq 1}G_{k2,\theta }(u_{1},u_{2},u_{3})$$ \noindent over $[-\pi,\pi]^{3}$ holds. Specifically, under \textbf{Assumptions I--IV}, and equation (\ref{svfo}),  keeping in mind equations (\ref {bonwo})  and  (\ref{bonwovv}),  we obtain
 \begin{eqnarray}&&\int_{[-\pi,\pi]^{3}}\left|\mathcal{G}_{1}(u_{1},u_{2},u_{3})\right|\prod_{i=1}^{3}du_{i}\leq
\int_{[-\pi,\pi]^{3}}\sum_{k\geq 1}\left|G_{k1,\theta }(u_{1},u_{2},u_{3})\right|\prod_{i=1}^{3}du_{i}\nonumber\\&&= \int_{[-\pi,\pi]^{3}}\sum_{k\geq 1}\int_{\Lambda }\int_{-\pi}^{\pi}\frac{M_{2u_{1}+\omega ,\mathcal{F}}(\lambda )}{|2u_{1}+\omega |^{\alpha (\lambda ,\theta _{0})}}\frac{M_{2u_{2}-\omega ,\mathcal{F}}(\lambda )}{|2u_{2}-\omega |^{\alpha (\lambda ,\theta _{0})}}
\nonumber\\
&&\hspace*{1cm}
\times \left|\ln\left(M_{-(2u_{1}+4u_{3}+\omega ) }(\lambda
)\right)-\ln\left(\Sigma_{\theta }^{2}(\lambda )
\right)\right.\nonumber\\
&&\left. \hspace*{1.5cm}-\alpha (\lambda ,\theta ) \ln\left(
|-(2u_{1}+4u_{3}+\omega ) |\right)\right| \widetilde{W}(\lambda )|-(2u_{1}+4u_{3}+\omega ) |^{\beta
}\nonumber\\
&&\hspace*{0.5cm}
\times \left|\ln\left(M_{\omega }(\lambda
)\right)-\ln\left(\Sigma_{\theta }^{2}(\lambda )
\right)\right.\nonumber\\
&&\left. \hspace*{1.5cm}-\alpha (\lambda ,\theta ) \ln\left(
|\omega |\right)\right| \widetilde{W}(\lambda )|\omega  |^{\beta
}d\omega d\left\langle E_{\lambda }(\psi_{k}),\psi_{k}\right\rangle_{\widetilde{H}}\prod_{i=1}^{3}du_{i}\nonumber\\
&& \leq [\mathcal{H}(\theta )]^{2}\pi^{3}\int_{-\pi}^{\pi}\left\|\mathcal{F}_{\omega }\right\|_{\mathcal{S}(\widetilde{H})}^{2}d\omega <\infty.
\label{Gfunctcc}
\end{eqnarray}
\noindent  Similarly, we can prove that $\mathcal{G}_{2}(u_{1},u_{2},u_{3})\in L^{1}([-\pi,\pi]^{3}).$
Thus, the following limits are obtained from the  convolution of functions $\mathcal{G}_{1}(u_{1}, u_{2}, u_{3}),$ and $\mathcal{G}_{2}(u_{1}, u_{2}, u_{3})$ with  F\'ejer kernel in (\ref{eqshnorm}):
\begin{eqnarray} && \lim_{T\to \infty}
\frac{\mathcal{K}\pi}{T}\int_{[-\pi,\pi]^{4}}
\Phi_{4T}^{4}(u_{1},u_{2},u_{3})\nonumber\\
&& \hspace*{1cm}\times
\left\langle \mathcal{F}_{2u_{1}+\omega
,\theta_{0}}\mathcal{W}_{\omega,\theta }, \mathcal{F}_{2u_{2}-\omega
,\theta_{0}}\mathcal{W}_{-(2u_{1}+4u_{3}+\omega ),\theta }
\right\rangle_{\mathcal{S}(\widetilde{H})}d\omega  du_{1}du_{2}du_{3} \nonumber\\
&&+\lim_{T\to \infty}\frac{\mathcal{K}\pi}{T}\int_{[-\pi,\pi]^{4}}\Phi_{4T}^{4}(\widetilde{u}_{1},\widetilde{u}_{2},\widetilde{u}_{3})
\nonumber\\
&&\hspace*{1cm} \times \left\langle \mathcal{F}_{2\widetilde{u}_{1}+\omega
,\theta_{0}}\mathcal{W}_{\omega ,\theta }
,\mathcal{F}_{2\widetilde{u}_{2}-\omega
,\theta_{0}}\mathcal{W}_{4\widetilde{u}_{3}-2\widetilde{u}_{1} -\omega , \theta
}\right\rangle_{\mathcal{S}(\widetilde{H})}d\omega d\widetilde{u}_{1}d\widetilde{u}_{2}
d\widetilde{u}_{3}\nonumber\\
&&=\lim_{T\to \infty}\frac{\mathcal{K}\pi}{T}\left[\mathcal{G}_{1}(0,0,0)+\mathcal{G}_{2}(0,0,0)\right].
\label{Gfuncthh}
\end{eqnarray}

From equations (\ref{eqshnorm})--(\ref{Gfuncthh}),
 as $T\to \infty,$
$$E\left\| \int_{-\pi}^{\pi}\left[p_{\omega
}^{(T)}-\mathcal{F}^{(T)}_{\omega,\theta_{0}}\right]
\mathcal{W}_{\omega,\theta
}d\omega\right\|_{\mathcal{S}(\widetilde{H})}^{2}=
\mathcal{O}\left(\frac{1}{T}\right).$$

Applying Jensen's inequality,
\begin{eqnarray}&&
E\left\| \int_{-\pi}^{\pi}\left[p_{\omega
}^{(T)}-\mathcal{F}^{(T)}_{\omega,\theta_{0}}\right]
\mathcal{W}_{\omega,\theta
}d\omega\right\|_{\mathcal{S}(\widetilde{H})}\nonumber\\
&&\leq
\sqrt{E\left\| \int_{-\pi}^{\pi}\left[p_{\omega
}^{(T)}-\mathcal{F}^{(T)}_{\omega,\theta_{0}}\right]
\mathcal{W}_{\omega,\theta
}d\omega\right\|_{\mathcal{S}(\widetilde{H})}^{2}}\to 0,\quad T\to \infty.\nonumber\\
\label{eqji}
\end{eqnarray}
 From (\ref{limuk})  and (\ref{eqji}), applying triangle inequality, equation (\ref{eqtlimits1}) holds. In particular,

 \begin{equation}\left\|U_{T,\theta}-U_{\theta }
 \right\|_{\mathcal{S}(\widetilde{H})}\to_{P} 0,
 \quad T\to \infty,\quad  \forall \theta \in \Theta.\label{eqcprob}
 \end{equation}

  Therefore, for each $\theta \in \Theta ,$ as $T\to \infty,$
   \begin{eqnarray}&&\left\|
   U_{T,\theta}-U_{T,\theta_{0}}-\mathcal{K}(\theta_{0},\theta )
   \right\|_{\mathcal{S}(\widetilde{H})}\nonumber\\
 &&\hspace*{-1cm} = \left[\sum_{k,l\geq 1}\left|[U_{T,\theta}-U_{T,
 \theta_{0}}](\psi_{k})(\psi_{l})
 -[\mathcal{K}(\theta_{0},\theta )](\psi_{k})(\psi_{l})\right|^{2}\right]^{1/2}
  \to_{P} 0,\nonumber\\
  \label{clfkld}
  \end{eqnarray}
\noindent implying that, as $T\to \infty,$
\begin{equation}\sup_{k\geq 1}\left|[U_{T,\theta}-U_{T,\theta_{0}}](\psi_{k})(\psi_{k})
-[\mathcal{K}(\theta_{0},\theta )](\psi_{k})(\psi_{k})\right|
\to_{P} 0. \label{convtunik}\end{equation} \noindent From reverse triangle inequality,
denoting $L_{T}(\theta )=\sup_{k\geq 1}\left|[U_{T,\theta
}-U_{T,\theta_{0}}](\psi_{k})(\psi_{k})\right|$ and $\mathcal{L}(\theta
)=\sup_{k\geq 1}\left|[\mathcal{K}(\theta_{0},\theta
)](\psi_{k})(\psi_{k})\right|,$
  \begin{equation}L_{T}(\theta ) \to_{P}
\mathcal{L}(\theta ),  \quad T\to \infty,\quad \forall \theta \in \Theta.
\label{convunifemplf}\end{equation}
  From equations (\ref{eqjibb})--(\ref{mfethetabb}),
       \begin{eqnarray}&& \mathcal{L}(\theta )=
   \sup_{k\geq 1}[\mathcal{K}(\theta_{0},\theta )](\psi_{k})(\psi_{k})>0,
   \quad \theta\neq \theta_{0}\nonumber\\
     &&\theta_{0}=\mbox{arg} \ \min_{\theta \in \Theta }\mathcal{L}(\theta )=
 \mbox{arg} \
 \min_{\theta \in \Theta }\sup_{k\geq 1}
 [\mathcal{K}(\theta_{0}, \theta )](\psi_{k})(\psi_{k}),\label{mcf}
\end{eqnarray}
\noindent for any orthonormal basis
$\{\psi_{k},\ k\geq 1\}$ of $\widetilde{H}.$

To prove the consistency of the estimator $\widehat{\theta }_{T}$ in
(\ref{mfetheta}), we first show that the convergence
(\ref{convunifemplf}) holds uniformly in $\theta \in \Theta .$
Specifically, for any $\theta_{1},\theta_{2}\in \Theta,$ from
equation (\ref{standarisdo}),
 considering  triangle inequality, and the fact that $p_{\omega }^{(T)}$
 and $\mathcal{W}_{\omega }$ are non--negative operators
 for every $\omega \in [-\pi,\pi],$ we obtain,  for each $k\geq 1,$
\begin{eqnarray}
&&\left|U_{T,\theta_{1}}-U_{T,\theta_{2}}(\psi_{k})(\psi_{k})
\right|\nonumber\\&&\leq \int_{-\pi}^{\pi}\left|p_{\omega }^{(T)}
\ln\left(\Upsilon_{\omega ,\theta_{2} }\Upsilon_{\omega
,\theta_{1}}^{-1}\right)\mathcal{W}_{\omega
}(\psi_{k})(\psi_{k})\right|d\omega
\nonumber\\
&&= \int_{-\pi}^{\pi}\left| \ln\left( \sigma^{2}_{\theta_{1}}[\sigma^{2}_{\theta_{2}}]^{-1}\right)
+\left(\mathcal{A}_{\theta_{1}}-\mathcal{A}_{\theta_{2}}\right)\ln
\left(|\omega |\right)\right|\nonumber\\&& \hspace*{3cm}\times
\left|p_{\omega }^{(T)}\mathcal{W}_{\omega
}(\psi_{k})(\psi_{k})\right|d\omega
\nonumber\\
&&\leq \left\| \ln\left(\sigma^{2}_{\theta_{1}}[\sigma^{2}_{\theta_{2}}]^{-1}\right)\right\|_{\mathcal{L}(\widetilde{H})}
 \int_{-\pi}^{\pi}p_{\omega }^{(T)}
 \mathcal{W}_{\omega }(\psi_{k})(\psi_{k})
 d\omega \nonumber\\
&&+\left\|\mathcal{A}_{\theta_{1}}-
\mathcal{A}_{\theta_{2}}\right\|_{\mathcal{L}(\widetilde{H})}
\int_{-\pi}^{\pi} \left|\ln \left(|\omega |\right)\right|  p_{\omega
}^{(T)}\mathcal{W}_{\omega }(\psi_{k})(\psi_{k})d\omega . \label{mc}
\end{eqnarray}

From (\ref{mc}), to prove the convergence (\ref{convunifemplf})
holds  uniformly in $\theta \in \Theta ,$ we only need to show that,
for any $k\geq 1,$
\begin{eqnarray}
&&\int_{-\pi}^{\pi }p_{\omega }^{(T)}\mathcal{W}_{\omega}
(\psi_{k})(\psi_{k})d\omega =\mathcal{O}_{P}(1),\quad T\to \infty\label{pconv0}\\
&&\int_{-\pi}^{\pi} \left|\ln \left(|\omega |\right)\right|
p_{\omega }^{(T)}\mathcal{W}_{\omega}(\psi_{k})(\psi_{k})d\omega
=\mathcal{O}_{P}(1),\quad T\to \infty \label{pconv}
\end{eqnarray}
\noindent (see Theorems 21.9 and 21.10 in \cite{Davidson94}). Note
that,  for $k\geq 1,$
\begin{eqnarray}
&&\sigma_{\theta _{0}}^{2}(\psi_{k})(\psi_{k})=\int_{-\pi}^{\pi
}\mathcal{F}_{\omega ,\theta _{0}}\mathcal{W}_{\omega}
(\psi_{k})(\psi_{k})d\omega \leq \|\sigma_{\theta
_{0}}^{2}\|_{\mathcal{L}(\widetilde{H})}
< \infty\label{pconv0vs}\\
&&\int_{-\pi}^{\pi}\left|\ln \left(|\omega |\right)\right|
\mathcal{F}_{\omega ,
\theta_{0}}\mathcal{W}_{\omega}(\psi_{k})(\psi_{k})d\omega \leq
2\pi\sup_{(\omega ,\lambda )\in [-\pi,\pi]\times \Lambda }
|\ln (|\omega |)|/|\omega |^{\alpha (\lambda ,\theta_{0})-\beta }\nonumber\\
&& \hspace*{1cm} \times \sup_{\omega\in [-\pi,\pi]}\|\widetilde{W}\mathcal{M}_{\omega ,\mathcal{F}
}\|_{\mathcal{L}(\widetilde{H})}<\infty,\label{pconvss}
\end{eqnarray}
\noindent where, for $\beta >1,$
$$\mathcal{C}=2\pi\sup_{(\omega ,\lambda )\in [-\pi,\pi]\times \Lambda }
|\ln (|\omega |)|/|\omega |^{\alpha (\lambda ,\theta_{0})-\beta }<\infty.
$$
From Theorem \ref{pr1},  as $T\to
\infty,$
\begin{eqnarray}\left\|
\int_{-\pi}^{\pi} \left[E\left[p_{\omega
}^{(T)}\right]-\mathcal{F}_{\omega ,\theta _{0}} \right]
\mathcal{W}_{\omega}d\omega \right\|_{\mathcal{S}(\widetilde{H})}
\to 0\label{cetambi}\\
\left\|\int_{-\pi}^{\pi}|\ln (|\omega |)|\left[E\left[p_{\omega
}^{(T)} \right]-\mathcal{F}_{\omega ,\theta _{0}}\right]
\mathcal{W}_{\omega}d\omega \right\|_{\mathcal{S}(\widetilde{H})}\to
0.\label{convzerex}
\end{eqnarray}

In a similar way to equations (\ref{eqshnorm})--(\ref{eqcprob}), it
can also be proved that  \begin{eqnarray} &&E\left\|\int_{-\pi}^{\pi
}\left[p_{\omega }^{(T)}- \mathcal{F}_{\omega ,\theta
_{0}}^{(T)}\right]\mathcal{W}_{\omega}d\omega
\right\|^{2}_{\mathcal{S}(\widetilde{H})} \to 0,\  T\to \infty
\label{eqconvshuc1}
\\
&&E\left\|\int_{-\pi}^{\pi }|\ln (|\omega |)|\left[p_{\omega
}^{(T)}- \mathcal{F}_{\omega ,\theta
_{0}}^{(T)}\right]\mathcal{W}_{\omega}d\omega
\right\|^{2}_{\mathcal{S}(\widetilde{H})} \to 0, \ T\to \infty .
\label{eqconvshuc2}
\end{eqnarray}
From
equations (\ref{pconv0vs})--(\ref{eqconvshuc2}),
  as $T\to \infty,$ \begin{eqnarray}
&&\left\|\int_{-\pi}^{\pi }\left[p_{\omega }^{(T)}-
\mathcal{F}_{\omega ,\theta _{0}}\right]\mathcal{W}_{\omega}d\omega
\right\|_{\mathcal{S}(\widetilde{H})} \to_{P} 0\label{eqconvshuc1vv}
\\
&&\left\|\int_{-\pi}^{\pi }|\ln (|\omega |) |\left[p_{\omega
}^{(T)}-\mathcal{F}_{\omega ,\theta _{0}}\right]
\mathcal{W}_{\omega}d\omega \right\|_{\mathcal{S}(\widetilde{H})}
\to_{P} 0. \label{eqconvshuc2vv}
\end{eqnarray}

\noindent From (\ref{eqconvshuc1vv})--(\ref{eqconvshuc2vv}),
equations (\ref{pconv0}) and (\ref{pconv})  are satisfied  uniformly
in $k\geq 1.$ Thus, (\ref{convunifemplf}) holds uniformly in $\theta
\in \Theta .$

To prove $\widehat{\theta }_{T}$ is weakly consistent, consider that
$\widehat{\theta }_{T}$ does not converge in probability to
$\theta_{0}.$ Hence, there exists a subsequence $\{\widehat{\theta
}_{T_{m}},\ m\in \mathbb{N}\}$ such that $\widehat{\theta
}_{T_{m}}\to_{P}\theta^{\prime }\neq \theta_{0},$ as $T_{m}\to
\infty,$ when $m\to \infty.$  From (\ref{mcf}),
 for  $\tau >0$ satisfying $0<\nu< \mathcal{L}(\theta^{\prime })-\tau ,$
 for certain $\nu>0,$ applying uniform convergence  in $\theta \in \Theta ,$ in equation   (\ref{convunifemplf}),    there exists $m_{0}$ such that for $m\geq m_{0},$
\begin{equation}
P\left[\inf_{l\geq m}L_{T_{l}}(\widehat{\theta }_{T_{l}})\geq
\mathcal{L}(\theta ^{\prime })-\tau>\nu >0 \right]\geq p_{0}>1/2.
\label{eqfpi}
\end{equation}

From  equations (\ref{clfkld}), (\ref{convtunik}) and (\ref{mcf}),
for $T$ sufficiently large, $$U_{T,\theta
}-U_{T,\theta_{0}}(\psi_{k})(\psi_{k})\geq 0,\quad \forall k\geq
1.$$ \noindent Then, from definition of the estimator
  $\widehat{\theta }_{T}$  in (\ref{mfetheta}), and  uniform
  convergence in probability in (\ref{convunifemplf}),
    that also holds in the  $\mathcal{S}(\widetilde{H})$ norm
  (see equations (\ref{eqconvshuc1vv})--(\ref{eqconvshuc2vv})),
  there exists $m_{0}^{\star}$ such that for $m\geq m_{0}^{\star},$
\begin{equation}
P\left[\sup_{l\geq m}L_{T_{l}}(\widehat{\theta }_{T_{l}})\leq
\inf_{\theta \in \Theta } \mathcal{L}(\theta )=\mathcal{L}(\theta_{0})=0\right]\geq
p_{0}>1/2, \label{eqfpi2}
\end{equation}
\noindent which, in particular, implies
\begin{equation}
P\left[\inf_{l\geq m}L_{T_{l}}(\widehat{\theta }_{T_{l}})\leq
\inf_{\theta \in \Theta } \mathcal{L}(\theta )=\mathcal{L}(\theta_{0})=0\right]\geq
p_{0}>1/2. \label{eqfpi3}
\end{equation}

For $m\geq \max\{m_{0},m_{0}^{\star}\},$  equations
(\ref{eqfpi})--(\ref{eqfpi3}) lead to a contradiction. Thus,
$\widehat{\theta }_{T}\to_{P}\theta_{0},$ as $T\to \infty.$

\end{proof}
\begin{remark} The
multifractionally integrated functional autoregressive
moving averages process family introduced in Section \ref{SFIFLTS}
satisfies the conditions assumed in Theorem \ref{theslrd}, for a
suitable choice of the  polynomial sequence $\left\{
\Phi_{p,l},\ \Psi_{q,l}, \  l\geq 1\right\}.$
\end{remark}

\section{Final comments}
\label{conclus}
The spectral analysis of SRD functional time series has been currently achieved  in several papers. Particularly, in the Introduction, we have referred  to
the pioneer contribution in \cite{Panaretos13}. This paper constitutes a first attempt  in the spectral analysis of stationary functional time series beyond the SRD condition.  Specifically,
 this paper applies spectral theory of self--adjoint operators on a separable Hilbert space to
characterize LRD in functional time series in the spectral domain, under \textbf{Assumptions I--IV} (see Proposition \ref{prlrd}). As special cases, multifractionally integrated functional ARMA processes  are considered  (see  Section \ref{SFIFLTS}). Their  tapered continuous version in the spectral domain is also analyzed in Section \ref{CSCS}. This second example allows the implementation of parametric estimation  techniques in the functional spectral domain, from the  discrete sampling in time of the solution to models introduced in \cite{AnhLeonenkoa};  \cite{AnhLeonenkob};  \cite{Kelbert05}.  Our main results, Theorems \ref{pr1}  and \ref{theslrd}, respectively provide the convergence   to zero  in $\mathcal{S}(\widetilde{H})$ norm of the  bias of the integrated periodogram operator,  and the weak consistent estimation of the LRD operator, in a parametric framework in the spectral domain. Note that Theorem  \ref{pr1} holds  beyond the linear and Gaussian case, under our  LRD setting, while   Theorem  \ref{theslrd} is proved under a   LRD Gaussian scenario.

%
%

\section*{Acknowledgements}
This work has been supported in part by project PGC2018-099549-B-I00
of the Ministerio de Ciencia, Innovaci\'on y Universidades, Spain
(co-funded with FEDER funds).
This work is also supported in part by the IMAG–Maria de Maeztu grant \\ CEX2020-001105-M / AEI / 10.13039/501100011033

We would like to thank  Professors Antonio Cuevas and Daniel Pe\~na
for their helpful comments and suggestions that have  contributed to
the improvement of the present paper in an important way.

\end{document}